\documentclass[11pt,letterpaper]{article}

\usepackage{amsmath}
\usepackage{amssymb}
\usepackage{amsthm}
\usepackage[hyperfootnotes=false]{hyperref}
\usepackage[inline]{enumitem}
\usepackage[margin=1.4in]{geometry}
\newcommand{\abs}[1]{\lvert#1\rvert} 

\let\phi\varphi

\let\setminus\smallsetminus

\theoremstyle{plain}
\newtheorem{thm}{Theorem}[section]
\theoremstyle{definition}
\newtheorem{lem}[thm]{Lemma}
\newtheorem{prop}[thm]{Proposition}
\theoremstyle{definition}
\newtheorem*{rmk}{Remark}
\newtheorem{dfn}[thm]{Definition}

\title{Supercharacter Theories of Dihedral Groups}
\author{Jonathan Lamar}
\date{}

\allowdisplaybreaks

\begin{document}


\maketitle
\abstract{The set of supercharacter theories of a fixed group $G$ forms a natural lattice.  An open question in the study of supercharacter theories is to classify this lattice, and to date, this has only been done for the cyclic groups $\mathbb{Z}_n$.  In this paper, we classify the supercharacter theory lattice of the dihedral groups $D_{2n}$ in terms of their cyclic subgroups of rotations.}


\section{Introduction}

Supercharacter theories are certain coarsenings of the usual character theory of a finite group.  They were first introduced in \cite{Andre1995}, \cite{Yan2001}, \cite{Andre2002}, and \cite{Andre2006} to study the representation theory of the groups $U_n(q)$ of $n\times n$ unipotent upper triangular matrices over finite fields $\mathbb{F}_q$.  In these papers, the authors studied a particular supercharacter theory of $U_n(q)$ which approximates the character theory of those groups.  Although the character theory of $U_n(q)$ remains a ``wild'' problem (see \cite{Higman1960}, \cite{VeraLopez2003}, \cite{Poljak1966}), this approximation has yielded partial solutions to previously intractible problems.  For example, in \cite{AriasCastro2004}, the authors used this supercharacter theory to provide bounds on the rate of convergence to equilibrium of a well-known random walk on $U_n(q)$.

In \cite{Diaconis2008}, Diaconis and Isaacs formally defined the notion of a supercharacter theory for an arbitrary finite group and studied a particular supercharacter theory for a family of finite groups known as algebra groups.  These and related supercharacter theories were studied in subsequent papers (for example, see \cite{Diaconis2009}, \cite{Marberg2009}, \cite{Aguiar2012}, and \cite{Andrews2015c}) where priority was placed on using supercharacter theory to ease computation of character values and build combinatorial Hopf-theoretic structures for nested families of groups with given supercharacter theories in a manner analogous to \cite{macdonald}.

As we will see below, the set $\mathrm{SCT}(G)$ of all supercharacter theories of a fixed group $G$ forms a lattice under a natural partial order.  The first author to directly attempt to classify the lattice of all supercharacter theories of a fixed group was Hendrickson in \cite{Hendrickson2008}.  In this paper, he classified the supercharacter theories of cyclic groups of prime order, while in \cite{Hendrickson2012}, he classified the supercharacter theories of arbitrary cyclic groups using the previous work of Leung and Man (see \cite{Leung1996} and \cite{Leung1998}) on Schur rings.  In \cite{Benidt2014}, the authors studied the combinatorial properties of the lattice of supercharacter theories of a cyclic group.

So far, no attempt has been made to classify these lattices for other families of finite groups.  Since cyclic groups' supercharacter theories have been classified, it is natural to turn our attention to extensions of cyclic groups.  The goal of the present paper is to provide an explicit classification of the supercharacter theories of the dihedral groups $D_{2n}$ of order $2n$ using their cyclic subgroups of rotations.  We will do this by first embedding a canonical sublattice of $\mathrm{SCT}(\mathbb{Z}_d)$ into $\mathrm{SCT}(D_{2n})$ for each divisor $d$ of $n$, and defining a certain refining map which produces the remainder of $\mathrm{SCT}(D_{2n})$ from the supercharacter theories obtained from $\mathrm{SCT}(\mathbb{Z}_n)$.

\section{Basic notions}

In this section, we will summarize some of the known results concerning the lattice of all supercharacter theories of a group.  In particular, Lemma \ref{lem:Hen_sums_span}, Propositions \ref{prop:Hen_order} and \ref{prop:Hen_join}, and Theorem \ref{thm:Zn_classification} are reproduced from \cite{Hendrickson2008} and \cite{Hendrickson2012}.

\subsection{The lattice of supercharacter theories}\label{subsec:Lattice}

A \emph{supercharacter theory} of a finite group $G$ is an ordered pair $S = (\mathcal{K},\mathcal{X})$, where $\mathcal{K}$ is a partition of $G$ into unions of conjugacy classes, $\mathcal{X}$ is a partition of $\mathrm{Irr}(G)$, and such that the following conditions are met:
\begin{enumerate}
	\item $\abs{\mathcal{K}} = \abs{\mathcal{X}}$;
	\item if $e$ denotes the identity of $G$, then $\{e\} \in \mathcal{K}$;
	\item for each $X\in \mathcal{X}$, the character $\sigma_X = \sum_{\chi\in X}\chi(e)\chi$ is constant on the parts of $\mathcal{K}$.
\end{enumerate}
If $S=(\mathcal{K},\mathcal{X})$ is a supercharacter theory, then we call the parts of $\mathcal{K}$ the \emph{superclasses} of $S$, or \emph{$S$-superclasses}, and we call the the functions $\sigma_X$ the \emph{supercharacters of $S$}, or \emph{$S$-supercharacters}.  Let $\abs{S}$ denote the number of superclasses (equivalently the number of supercharacters) of $S$.

For any group $G$, let $\mathbf{1}_G$ denote the trivial character of $G$ and let $e_G$ denote the identity element of $G$.  If there is no chance of confusion, we will drop the subscript and simply write $e$ for the identity.  For any group $G$, we can define two extreme supercharacter theories, which are (using Hendrickson's notation)
\[
	m(G) = \big(\mathrm{Cl}(G), \big\{\{\chi\}\,:\,\chi\in\mathrm{Irr}(G)\big\}\big),
\]
and
\[
	M(G) = \big(\big\{\{e\},G\setminus\{e\}\big\}, \big\{\{\mathbf{1}_G\}, \mathrm{Irr}(G)\setminus\{\mathbf{1}_G\}\big\}\big),
\]
where $\mathrm{Cl}(G)$ is the set of conjugacy classes of $G$.  These supercharacter theories are distinct for all groups $G$ of order greater than 2.  In \cite{Burkett2017}, it is shown that the only groups for which these are all of the supercharacter theories are the cyclic group $\mathbb{Z}_3$, the symmetric group $S_3$, and the finite symplectic group $\mathrm{Sp}(6,2)$.

For an algebraic interpretation of $\mathrm{SCT}(G)$, consider the following.  To every supercharacter theory $S=(\mathcal{K},\mathcal{X})$, there is an associated algebra $\mathrm{scf}_S(G)$ of \emph{superclass functions}, i.e., complex-valued functions on $G$ which are constant on the parts of $\mathcal{K}$.  It is clear that the superclass identifier functions form a basis for $\mathrm{scf}_S(G)$.  Since the supercharacters are linearly independent and $\abs{X}=\abs{K}$, it follows that the supercharacters also form a basis for $\mathrm{scf}_S(G)$.

Through the identification $f\leftrightarrow \sum_{g\in G}f(g)g$ of class functions on $G$ with elements of $Z(\mathbb{C} G)$, we obtain a subalgebra of $Z(\mathbb{C} G)$.  An obvious basis for this subalgebra is the set of \emph{superclass sums}, which are defined to be the elements
\[
	\underline{K} = \sum_{g\in K}g \in \mathbb{C} G
\]
for $K\in\mathcal{K}$.  In \cite{Diaconis2008}, Diaconis and Isaacs proved that the superclass sums and supercharacters span their respective generated subalgebras and that the partitions $\mathcal{K}$ and $\mathcal{X}$ uniquely determine each other.

\begin{lem}\label{lem:Hen_sums_span}\cite[Lemma 2.1]{Hendrickson2012}, \cite[Theorem 2.2(b)]{Diaconis2008}
	Let $\mathcal{K}$ be a partition of $G$.  Then $\mathcal{K}$ is the superclass partition of a supercharacter theory if and only if $\mathrm{span}\{\underline{K}:K\in\mathcal{K}\}$ is a subalgebra of $Z(\mathbb{C} G)$.
\end{lem}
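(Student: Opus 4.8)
The plan is to run both implications through the structure of the commutative semisimple algebra $Z(\mathbb{C}G)$, whose primitive idempotents are the central idempotents $e_\chi = \frac{\chi(e)}{\abs{G}}\sum_{g\in G}\overline{\chi(g)}\,g$ for $\chi\in\mathrm{Irr}(G)$. Throughout I read ``subalgebra of $Z(\mathbb{C}G)$'' as containing the identity $1_{\mathbb{C}G}=e_G$, as is standard here; as the argument will show, that hypothesis is exactly what forces $\{e\}\in\mathcal{K}$. The computation linking the two sides is this: for any subset $X\subseteq\mathrm{Irr}(G)$, writing $e_X := \sum_{\chi\in X}e_\chi$ and $\sigma_X(g) = \sum_{\chi\in X}\chi(e)\chi(g)$, one has $e_X = \frac{1}{\abs{G}}\sum_{g\in G}\overline{\sigma_X(g)}\,g$ (using that $\chi(e)$ is real). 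So the coefficient vector of $e_X$ in the group basis is, up to conjugation and the scalar $\abs{G}^{-1}$, the list of values of $\sigma_X$.

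For the forward implication, let $S=(\mathcal{K},\mathcal{X})$ be a supercharacter theory with superclass partition $\mathcal{K}$. Each $\underline{K}$ lies in $Z(\mathbb{C}G)$ since $K$ is a union of conjugacy classes, so $A:=\mathrm{span}\{\underline{K}:K\in\mathcal{K}\}$ is a subspace of $Z(\mathbb{C}G)$ of dimension $\abs{\mathcal{K}}$ (the $\underline{K}$ have pairwise disjoint supports). Condition (3) says each $\sigma_X$ is constant on the parts of $\mathcal{K}$, so the displayed formula rewrites as $e_X = \frac{1}{\abs{G}}\sum_{K\in\mathcal{K}}\overline{\sigma_X(K)}\,\underline{K}\in A$. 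The $e_X$ for $X\in\mathcal{X}$ are nonzero, pairwise orthogonal idempotents, hence linearly independent, and there are $\abs{\mathcal{X}}=\abs{\mathcal{K}}=\dim A$ of them, so they form a basis of $A$. Therefore $A=\mathrm{span}\{e_X:X\in\mathcal{X}\}$, which is closed under multiplication because $e_Xe_Y=\delta_{X,Y}e_X$, and contains $1_{\mathbb{C}G}=\sum_{\chi\in\mathrm{Irr}(G)}e_\chi=\sum_{X\in\mathcal{X}}e_X$; so $A$ is a subalgebra of $Z(\mathbb{C}G)$.

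For the converse, suppose $A:=\mathrm{span}\{\underline{K}:K\in\mathcal{K}\}$ is a subalgebra of $Z(\mathbb{C}G)$. Each $\underline{K}$ is then central, so each part $K$ is conjugation-invariant, i.e.\ a union of conjugacy classes, as a superclass partition must be. Since $Z(\mathbb{C}G)\cong\mathbb{C}^{\abs{\mathrm{Irr}(G)}}$ has no nonzero nilpotents, neither does $A$; a finite-dimensional reduced commutative $\mathbb{C}$-algebra is isomorphic to $\mathbb{C}^m$ with $m=\dim A=\abs{\mathcal{K}}$, so $1_{\mathbb{C}G}$ is a sum $f_1+\dots+f_m$ of primitive orthogonal idempotents of $A$. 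Each $f_j$ is an idempotent of $Z(\mathbb{C}G)$, hence equals $\sum_{\chi\in X_j}e_\chi=e_{X_j}$ for a subset $X_j\subseteq\mathrm{Irr}(G)$; since the $f_j$ are orthogonal and sum to $\sum_{\chi}e_\chi$, the sets $X_1,\dots,X_m$ form a partition $\mathcal{X}$ of $\mathrm{Irr}(G)$. Now $\abs{\mathcal{X}}=m=\abs{\mathcal{K}}$ gives (1). Because $e_{X_j}=f_j\in A=\mathrm{span}\{\underline{K}\}$, the coefficient of $g$ in $e_{X_j}$ depends only on the part of $\mathcal{K}$ containing $g$; by the displayed formula that coefficient is $\frac{1}{\abs{G}}\overline{\sigma_{X_j}(g)}$, so $\sigma_{X_j}$ is constant on each part of $\mathcal{K}$, which is (3). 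Finally, if $K_0$ is the part containing $e$, then comparing the coefficient of $e$ with that of an arbitrary $g\in K_0\setminus\{e\}$ in $1_{\mathbb{C}G}=\sum_{K}c_K\underline{K}$ forces $c_{K_0}=1$ and at the same time $c_{K_0}=0$; hence $K_0=\{e\}$, which is (2). Thus $(\mathcal{K},\mathcal{X})$ is a supercharacter theory with superclass partition $\mathcal{K}$, and the construction shows $\mathcal{X}$ is determined by $\mathcal{K}$.

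The idempotent bookkeeping is routine once the dictionary of the first paragraph is in place; the one structural input is the claim in the converse that a span of superclass sums which is closed under multiplication must be a product of copies of $\mathbb{C}$ spread ``diagonally'' over a sub-collection of the $e_\chi$, that this sub-collection is forced to exhaust $\mathrm{Irr}(G)$ by the presence of $1_{\mathbb{C}G}$, and hence that the partition $\mathcal{X}$ drops out. I expect the main difficulty to be care rather than depth: keeping the complex-conjugation (equivalently, inversion) twist straight among the group basis, the idempotent basis $\{e_\chi\}$, and the functions $\sigma_X$, so that ``the coefficient on $\underline{K}$ is constant'' translates faithfully into ``$\sigma_X$ is constant on the superclass $K$''.
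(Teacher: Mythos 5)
Your proof is correct, and it is the standard argument from the cited sources (the paper itself states this lemma without proof, citing Hendrickson and Diaconis--Isaacs, whose Theorem 2.2 proceeds by exactly this idempotent dictionary $e_X=\tfrac{1}{\abs{G}}\sum_{g}\overline{\sigma_X(g)}\,g$). You were also right to flag the unital convention explicitly: without requiring $1_{\mathbb{C}G}$ to lie in the span, the statement fails (e.g.\ the single-part partition of $\mathbb{Z}_2$), and your coefficient comparison correctly shows that this hypothesis is what forces $\{e\}\in\mathcal{K}$.
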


Let $\mathrm{SCT}(G)$ denote the set of all supercharacter theories of the fixed group $G$ and recall the refinement partial order on set partitions: if $\mathcal{A}$ and $\mathcal{B}$ are partitions of the same set $S$, then we say $\mathcal{A}\leq\mathcal{B}$ if each part of $\mathcal{A}$ is contained in a part of $\mathcal{B}$, or equivalently, if each part of $\mathcal{B}$ is a union of parts of $\mathcal{A}$.  The following result is due to \cite{Hendrickson2012}, but we can provide an alternate proof.

\begin{prop}\label{prop:Hen_order}\cite[Corollary 3.4]{Hendrickson2012}
	Let $S=(\mathcal{K},\mathcal{X})$ and $T = (\mathcal{L},\mathcal{Y})$ be supercharacter theories of a finite group $G$.  Then with respect to the refinement partial orders on $G$ and $\mathrm{Irr}(G)$, $\mathcal{K}\leq\mathcal{L}$ if and only if $\mathcal{X}\leq\mathcal{Y}$.
\end{prop}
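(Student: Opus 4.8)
The plan is to reduce both refinement conditions to a single statement about the algebras of superclass functions, namely the inclusion $\mathrm{scf}_T(G)\subseteq\mathrm{scf}_S(G)$, and then read off the equivalence. Throughout I will use the two facts recalled above: $\mathrm{scf}_S(G)$ is precisely the space of functions $G\to\mathbb{C}$ that are constant on the parts of $\mathcal{K}$, and it has as bases both the superclass identifier functions and the supercharacters $\sigma_X$, $X\in\mathcal{X}$ (and likewise for $T$). I do not expect to need Lemma~\ref{lem:Hen_sums_span}.

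First I would record the elementary partition fact that, for partitions $\mathcal{K}$ and $\mathcal{L}$ of $G$, one has $\mathcal{K}\leq\mathcal{L}$ if and only if $\mathrm{scf}_T(G)\subseteq\mathrm{scf}_S(G)$. The forward implication is immediate: if each part of $\mathcal{K}$ lies inside a part of $\mathcal{L}$, then any function constant on the parts of $\mathcal{L}$ is automatically constant on the parts of $\mathcal{K}$. For the converse, apply the inclusion to the superclass identifier function of an arbitrary $L\in\mathcal{L}$, which takes the value $1$ on $L$ and $0$ elsewhere; it then lies in $\mathrm{scf}_S(G)$, so it is constant on each $K\in\mathcal{K}$, which forces $K$ to be contained in $L$ or disjoint from it. Ranging over all $L\in\mathcal{L}$ shows every $K$ sits inside a single part of $\mathcal{L}$.

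Next I would prove the dual statement: $\mathcal{X}\leq\mathcal{Y}$ if and only if $\mathrm{scf}_T(G)\subseteq\mathrm{scf}_S(G)$. If $\mathcal{X}\leq\mathcal{Y}$, then each $Y\in\mathcal{Y}$ is a disjoint union of parts $X\in\mathcal{X}$, so $\sigma_Y=\sum_{X\subseteq Y}\sigma_X\in\mathrm{scf}_S(G)$; since the $\sigma_Y$ span $\mathrm{scf}_T(G)$, the inclusion follows. Conversely, assume $\mathrm{scf}_T(G)\subseteq\mathrm{scf}_S(G)$ and fix $Y\in\mathcal{Y}$, so that $\sigma_Y=\sum_{X\in\mathcal{X}}c_X\sigma_X$ for suitable scalars $c_X$. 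Comparing coefficients in the basis $\mathrm{Irr}(G)$: for $\chi\in X$ the coefficient of $\chi$ in $\sigma_Y$ equals $\chi(e)$ if $\chi\in Y$ and $0$ otherwise, while on the right it equals $c_X\chi(e)$. Since every degree $\chi(e)$ is a positive integer and the parts of $\mathcal{X}$ partition $\mathrm{Irr}(G)$, we conclude $c_X=1$ whenever $X\cap Y\neq\emptyset$ (and then $X\subseteq Y$) and $c_X=0$ otherwise; hence $Y=\bigcup_{c_X=1}X$ is a union of parts of $\mathcal{X}$, i.e.\ $\mathcal{X}\leq\mathcal{Y}$.

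Combining the two equivalences gives $\mathcal{K}\leq\mathcal{L}\iff\mathrm{scf}_T(G)\subseteq\mathrm{scf}_S(G)\iff\mathcal{X}\leq\mathcal{Y}$, which is the assertion. The one step I expect to require genuine care is the converse in the previous paragraph: deducing that $Y$ is an \emph{exact} union of the parts $X$ from the bare membership of $\sigma_Y$ in their span. The point is that the $\sigma_X$ have pairwise disjoint supports in $\mathrm{Irr}(G)$ with strictly positive coefficients, so there can be no cancellation and the expansion coefficients are forced to be $0$ or $1$; everything else is routine linear algebra and bookkeeping with partitions.
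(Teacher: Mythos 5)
Your proof is correct and uses essentially the same ingredients as the paper's: the forced $c_X\in\{0,1\}$ coefficients from comparing expansions in the basis $\mathrm{Irr}(G)$, and the fact that indicator functions of $\mathcal{L}$-parts lie in the span of the $T$-supercharacters. Routing both implications through the single equivalence with $\mathrm{scf}_T(G)\subseteq\mathrm{scf}_S(G)$ is a tidy repackaging (and the paper states that equivalence immediately after the proposition), but it is not a substantively different argument.
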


\begin{proof}
	Suppose $\mathcal{K}\leq\mathcal{L}$.  Then for all $Y\in\mathcal{Y}$, the $T$-supercharacter $\sigma_Y$ is constant on the parts of $\mathcal{K}$.  Thus, we have
	\[
		\sigma_Y = \sum_{X\in\mathcal{X}}c_X\sigma_X
	\]
	for some constants $c_X$.  In fact, by examining coefficients of irreducible characters, one sees that $c_X\in\{0,1\}$ for all $X$, whence $Y$ is a union of parts of $\mathcal{X}$.  Thus, $\mathcal{X}\leq\mathcal{Y}$.

	Conversely, suppose $\mathcal{X}\leq\mathcal{Y}$.  For any subset $A$ of $G$, let $\delta_A$ denote the indicator function of that set.  Then because the $T$-supercharacters form a basis for $\mathrm{scf}_T(G)$, it follows that for any $L\in\mathcal{L}$, there exist constants $c_Y$ such that
	\begin{align*}
		\delta_L &= \sum_{Y\in\mathcal{Y}}c_Y\sigma_Y \\
		&= \sum_{Y\in\mathcal{Y}}\sum_{X\subseteq Y}c_Y\sigma_X \\
		&= \sum_{Y\in\mathcal{Y}}\sum_{X\subseteq Y}\sum_{K\in\mathcal{K}}c_Y\sigma_X(K)\delta_K.
	\end{align*}
	Thus, $\delta_L$ is constant on the parts of $\mathcal{K}$, which implies that $L$ is a union of parts of $\mathcal{K}$.  Therefore, $\mathcal{K}\leq\mathcal{L}$.
\end{proof}

Using the above proposition, we may define a partial order on $\mathrm{SCT}(G)$ as follows: if $S = (\mathcal{K},\mathcal{X})$ and $T = (\mathcal{L},\mathcal{Y})$ are supercharacter theories of $G$, say $S\leq T$ if $\mathcal{K}$ is a refinement of $\mathcal{L}$, or equivalently if $\mathcal{X}$ is a refinement of $\mathcal{Y}$.  For $S,T\in\mathrm{SCT}(G)$, we have $S\leq T$ if and only if $\mathrm{scf}_T(G)\subseteq\mathrm{scf}_S(G)$.  Note that $S\leq T$ certainly implies $\abs{T}\leq\abs{S}$, but the converse need not be true.  Similarly, if $S\leq T$ and $\abs{T} = \abs{S}-1$, then $S<T$ is a covering relation in $\mathrm{SCT}(G)$.

\begin{prop}\label{prop:Hen_join}\cite[Proposition 3.3]{Hendrickson2012}
	If $S = (\mathcal{K},\mathcal{X})$ and $T = (\mathcal{L},\mathcal{Y})$ are supercharacter theories of a group $G$, then their lattice-theoretic join $S\vee T$ in $\mathrm{SCT}(G)$ exists, and moreover, it is given by
	\[
		S\vee T = (\mathcal{K}\vee\mathcal{L}, \mathcal{X}\vee\mathcal{Y}).
	\]
\end{prop}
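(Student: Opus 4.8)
The plan is to prove the statement in three stages: that $\mathcal{K}\vee\mathcal{L}$ is the superclass partition of some supercharacter theory $U$; that this $U$ is the least upper bound of $S$ and $T$ in $\mathrm{SCT}(G)$; and that the $\mathrm{Irr}(G)$-partition attached to $U$ is $\mathcal{X}\vee\mathcal{Y}$. Everything will rest on a single observation about joins of set partitions: two elements $g,h\in G$ lie in a common part of $\mathcal{K}\vee\mathcal{L}$ precisely when there is a chain $g=g_0,g_1,\dots,g_r=h$ in which each consecutive pair lies in a common part of $\mathcal{K}$ or a common part of $\mathcal{L}$. Hence an element of $Z(\mathbb{C}G)$ (equivalently, a class function on $G$) is constant on the parts of $\mathcal{K}\vee\mathcal{L}$ if and only if it is constant on the parts of $\mathcal{K}$ and on those of $\mathcal{L}$, i.e.
\[
	\mathrm{span}\{\underline{M}:M\in\mathcal{K}\vee\mathcal{L}\}=\mathrm{span}\{\underline{K}:K\in\mathcal{K}\}\cap\mathrm{span}\{\underline{L}:L\in\mathcal{L}\}
\]
inside $Z(\mathbb{C}G)$. (I would also note in passing that $\mathcal{K}\vee\mathcal{L}$ is again a partition of $G$ into unions of conjugacy classes having $\{e\}$ as a part, since $\{e\}$ is a singleton part of both $\mathcal{K}$ and $\mathcal{L}$.)

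Granting the displayed identity, the first two stages are immediate. By Lemma \ref{lem:Hen_sums_span} each span on the right is a subalgebra of $Z(\mathbb{C}G)$, so their intersection is too, and Lemma \ref{lem:Hen_sums_span} applied in reverse shows $\mathcal{K}\vee\mathcal{L}$ is the superclass partition of a supercharacter theory, which I will write $U=(\mathcal{K}\vee\mathcal{L},\mathcal{Z})$ for some as-yet-unidentified partition $\mathcal{Z}$ of $\mathrm{Irr}(G)$. Since $\mathcal{K}\le\mathcal{K}\vee\mathcal{L}$ and $\mathcal{L}\le\mathcal{K}\vee\mathcal{L}$ in the partition lattice of $G$, we have $S\le U$ and $T\le U$; and if $V=(\mathcal{M},\mathcal{N})$ is any supercharacter theory with $S\le V$ and $T\le V$, then $\mathcal{K}\le\mathcal{M}$ and $\mathcal{L}\le\mathcal{M}$, so $\mathcal{K}\vee\mathcal{L}\le\mathcal{M}$ and therefore $U\le V$. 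Thus $S\vee T$ exists and equals $U$.

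The remaining point — that $\mathcal{Z}=\mathcal{X}\vee\mathcal{Y}$ — is the only place I expect to need a little care. One inclusion is free from Proposition \ref{prop:Hen_order}: applied to $S\le U$ it gives $\mathcal{X}\le\mathcal{Z}$ and applied to $T\le U$ it gives $\mathcal{Y}\le\mathcal{Z}$, hence $\mathcal{X}\vee\mathcal{Y}\le\mathcal{Z}$. For the reverse I would fix a part $W\in\mathcal{X}\vee\mathcal{Y}$; as $\mathcal{X},\mathcal{Y}\le\mathcal{X}\vee\mathcal{Y}$, the set $W$ is both a union of parts of $\mathcal{X}$ and a union of parts of $\mathcal{Y}$, so $\sigma_W=\sum_{X\subseteq W}\sigma_X$ is an $S$-superclass function while $\sigma_W=\sum_{Y\subseteq W}\sigma_Y$ is a $T$-superclass function; by the displayed identity $\sigma_W\in\mathrm{scf}_U(G)=\mathrm{span}\{\sigma_Z:Z\in\mathcal{Z}\}$. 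Writing $\sigma_W=\sum_Z c_Z\sigma_Z$ and comparing coefficients of irreducible characters — the device already used in the proof of Proposition \ref{prop:Hen_order} — forces $c_Z\in\{0,1\}$ with $c_Z=1$ exactly when $Z\subseteq W$, so $W$ is a union of parts of $\mathcal{Z}$; letting $W$ range over $\mathcal{X}\vee\mathcal{Y}$ gives $\mathcal{Z}\le\mathcal{X}\vee\mathcal{Y}$, and with the previous inclusion we conclude $\mathcal{Z}=\mathcal{X}\vee\mathcal{Y}$. The one genuinely substantive move in all of this is recognizing, via the walk description of the partition join, that the join of superclass partitions corresponds to the intersection of the associated subalgebras of $Z(\mathbb{C}G)$; after that, Lemma \ref{lem:Hen_sums_span} and Proposition \ref{prop:Hen_order} do the work.
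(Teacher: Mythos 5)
Your proof is correct and complete. Note that the paper itself gives no proof of this proposition --- it is quoted from Hendrickson --- so there is nothing internal to compare against; but your central identification, that the join of the superclass partitions corresponds to the intersection of the two spans of superclass sums inside $Z(\mathbb{C} G)$ (justified by the walk description of the partition join), is exactly the standard argument, and your use of Lemma \ref{lem:Hen_sums_span} to get a supercharacter theory from that intersection, followed by Proposition \ref{prop:Hen_order} and the coefficient-comparison device to pin down the character partition as $\mathcal{X}\vee\mathcal{Y}$, is sound. The parenthetical checks (that $\{e\}$ remains a singleton part of $\mathcal{K}\vee\mathcal{L}$ and that its parts are unions of conjugacy classes) are the right hypotheses to verify before invoking the lemma, so no gaps remain.
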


Since $\mathrm{SCT}(G)$ is finite, the above result induces a meet operation on $\mathrm{SCT}(G)$ as well, i.e., we define $S\wedge T = \bigvee_{U\leq S,T}U$, making $\mathrm{SCT}(G)$ a lattice.  Unfortunately, the meet of two supercharacter theories need not be their mutual refinement.  In fact there are examples of meets of the form
\[
	(\mathcal{K},\mathcal{X})\wedge(\mathcal{L},\mathcal{Z}) = (\mathcal{M},\mathcal{Z}),
\]
where one of $\mathcal{M},\mathcal{Z}$ is equal to the appropriate mutual refinement and the other is not.  However, the superclasses and supercharacters of $(\mathcal{K},\mathcal{X})\wedge(\mathcal{L},\mathcal{Y})$ are refinements of $\mathcal{K}\wedge\mathcal{L}$ and $\mathcal{X}\wedge\mathcal{Y}$, respectively.

\subsection{Actions on supercharacter theories}\label{subsec:Actions}

If $A$ is a subgroup of $\mathrm{Aut}(G)$ for a finite group $G$ (more generally, if $A$ is a group which acts on $G$ via automorphisms), then the natural actions of $A$ on $G$ and on $\mathrm{Irr}(G)$ define an action of $A$ on $\mathrm{SCT}(G)$ in the following way.  If $S = (\mathcal{K},\mathcal{X})$ is a supercharacter theory and $\alpha$ an automorphism, then one can check that the pair of partitions
\[
	\alpha\cdot\mathcal{K} = \big\{\{\alpha(g)\,:\,g\in K\}\,:\,K\in\mathcal{K}\big\}
\]
and
\[
	\alpha\cdot\mathcal{X} = \big\{\{\chi\circ\alpha^{-1}\,:\,\chi\in X\}\,:\,X\in\mathcal{X}\big\}
\]
form a new supercharacter theory, which we denote $\alpha\cdot S$.  If $S$ is fixed by this action, then $S$ is called \emph{$A$-characteristic}, and if $A = \mathrm{Aut}(G)$, we simply say $S$ is \emph{characteristic}.  The subset of characteristic supercharacter theories is denoted $\mathrm{CharSCT}(G)$, and the following lemma tells up that it is in fact a sublattice of $\mathrm{SCT}(G)$.

\begin{lem}\label{prop:char_is_lattice}
	Let $G$ be a finite group and let $S=(\mathcal{K},\mathcal{X})$ and $T=(\mathcal{L},\mathcal{Y})$ be characteristic supercharacter theories of $G$.  Then $S\wedge T$ and $S\vee T$ are characteristic.
\end{lem}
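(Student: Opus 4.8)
The plan is to leverage the two structural facts already in hand: the join formula from Proposition~\ref{prop:Hen_join}, and the characterization of when a partition of $G$ underlies a supercharacter theory from Lemma~\ref{lem:Hen_sums_span}. First I would dispense with the join. If $S$ and $T$ are $\mathrm{Aut}(G)$-characteristic, then for any $\alpha\in\mathrm{Aut}(G)$ we have $\alpha\cdot\mathcal{K} = \mathcal{K}$ and $\alpha\cdot\mathcal{L} = \mathcal{L}$. Since $\alpha$ acts on the set of partitions of $G$ by a lattice automorphism of the partition lattice (it permutes the underlying set $G$, hence preserves refinement and therefore joins of partitions), we get $\alpha\cdot(\mathcal{K}\vee\mathcal{L}) = (\alpha\cdot\mathcal{K})\vee(\alpha\cdot\mathcal{L}) = \mathcal{K}\vee\mathcal{L}$, and similarly $\alpha\cdot(\mathcal{X}\vee\mathcal{Y}) = \mathcal{X}\vee\mathcal{Y}$ on the $\mathrm{Irr}(G)$ side. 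By Proposition~\ref{prop:Hen_join}, $S\vee T = (\mathcal{K}\vee\mathcal{L},\mathcal{X}\vee\mathcal{Y})$, so $\alpha\cdot(S\vee T) = S\vee T$; hence $S\vee T$ is characteristic.

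For the meet, I would argue that $\mathrm{CharSCT}(G)$ is closed under the meet operation of $\mathrm{SCT}(G)$ by showing that $\mathrm{Aut}(G)$ acts on $\mathrm{SCT}(G)$ by lattice automorphisms, so that the fixed-point set is automatically a sublattice. Concretely: each $\alpha\in\mathrm{Aut}(G)$ induces a bijection $S\mapsto\alpha\cdot S$ of $\mathrm{SCT}(G)$, and this bijection is order-preserving, because $S\le T$ means $\mathcal{K}$ refines $\mathcal{L}$, and applying the bijection $\alpha$ of $G$ to both partitions preserves refinement, so $\alpha\cdot S\le\alpha\cdot T$. An order-preserving bijection of a finite lattice whose inverse ($\alpha^{-1}\cdot(-)$) is also order-preserving is a lattice automorphism, hence commutes with both $\vee$ and $\wedge$. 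Therefore, if $S$ and $T$ are fixed by every $\alpha$, so is $\bigvee_{U\le S,T}U = S\wedge T$: indeed $\alpha\cdot(S\wedge T) = (\alpha\cdot S)\wedge(\alpha\cdot T) = S\wedge T$. Alternatively, one can spell this out directly from the definition $S\wedge T = \bigvee_{U\le S,T}U$: the set $\{U : U\le S, U\le T\}$ is carried to itself by $\alpha$ (since $\alpha$ preserves $\le$ and fixes $S,T$), so $\alpha$ permutes this set, and since $\alpha$ preserves joins it fixes the join of the set, which is $S\wedge T$.

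The only point requiring genuine care — and the main obstacle — is justifying that $\alpha$ preserving $\le$ on $\mathrm{SCT}(G)$ really is immediate: one must note that refinement of partitions of $G$ is preserved under applying the bijection $\alpha\colon G\to G$, which is clear, and then invoke Proposition~\ref{prop:Hen_order} only insofar as it guarantees the order is well-defined on the $\mathcal{K}$-side alone. Everything else is formal lattice theory about fixed points of a group acting by automorphisms. I would also remark that, unlike the join, we cannot describe the meet's superclass and supercharacter partitions explicitly as $\mathcal{K}\wedge\mathcal{L}$ and $\mathcal{X}\wedge\mathcal{Y}$ (the excerpt already warns that the meet need not be the mutual refinement), which is precisely why the argument must go through the abstract characterization $S\wedge T = \bigvee_{U\le S,T} U$ rather than through an explicit formula.
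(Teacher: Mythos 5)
Your proof is correct, and for the meet it takes a genuinely different route from the paper's. The paper handles both operations by the same concrete pattern: writing $S\wedge T=(\mathcal{M},\mathcal{Z})$, it notes that each $M\in\mathcal{M}$ is contained in some $K\cap L$ with $K\in\mathcal{K}$, $L\in\mathcal{L}$, deduces $\alpha\cdot(S\wedge T)\leq S,T$ and hence $\alpha\cdot(S\wedge T)\leq S\wedge T$, and then concludes equality from the cardinality identity $\abs{\alpha\cdot(S\wedge T)}=\abs{S\wedge T}$ (a refinement between two supercharacter theories with the same number of parts must be an equality); the join is treated symmetrically, with unions of parts in place of intersections followed by the same counting step. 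You instead observe that each $\alpha\in\mathrm{Aut}(G)$ acts on $\mathrm{SCT}(G)$ as an order isomorphism, hence as a lattice automorphism, and invoke the general principle that the fixed-point set of a group of lattice automorphisms is a sublattice, so $\alpha\cdot(S\wedge T)=(\alpha\cdot S)\wedge(\alpha\cdot T)=S\wedge T$. This is more abstract and arguably cleaner: it sidesteps the counting argument entirely, works uniformly for meet and join, and makes explicit that only the order structure of $\mathrm{SCT}(G)$ is being used. What it gives up is the explicit description of how $\alpha$ moves the parts of $\mathcal{M}$ and $\mathcal{N}$, which the paper's hands-on containment argument provides and which foreshadows the style of the later superclass computations. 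Your treatment of the join via Proposition~\ref{prop:Hen_join} and the compatibility of $\alpha$ with joins of partitions is essentially equivalent to the paper's upper-bound-plus-cardinality argument. Both proofs are valid.
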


\begin{proof}
	Write $S\wedge T = (\mathcal{M},\mathcal{Z})$, and let $M\in\mathcal{M}$.  Then there exist $K\in\mathcal{K}$ and $L\in\mathcal{L}$ such that $M\subseteq K\cap L$.  For any $\alpha\in\mathrm{Aut}(G)$, we have
	\begin{align*}
		\alpha\cdot(K\cap L) &= \{\alpha(g)\,:\,g\in K\cap L\} \\
		&= \{\alpha(g)\,:\,g\in K\}\cap\{\alpha(g)\,:\,g\in L\} \\
		&= (\alpha\cdot K)\cap(\alpha\cdot L),
	\end{align*}
	hence $\alpha\cdot M\subseteq (\alpha\cdot K)\cap(\alpha\cdot L)$.  Thus, $\alpha\cdot(S\wedge T) \leq S,T$, and therefore $\alpha\cdot(S\wedge T)\leq S\wedge T$.  But $\lvert S\wedge T\rvert=\lvert \alpha\cdot(S\wedge T)\rvert$, which implies that $\alpha\cdot(S\wedge T)=S\wedge T$, and therefore $S\wedge T$ is characteristic.

	Now write $S\vee T=(\mathcal{N},\mathcal{W})$, and let $N\in\mathcal{N}$.  Then there exist parts $K_i\in\mathcal{K}$ and $L_j\in\mathcal{L}$ such that $N=\bigcup_iK_i=\bigcup_jL_j$.  Hence for any $\alpha\in\mathrm{Aut}(G)$,
	\[
		\alpha\cdot N = \bigcup_i(\alpha\cdot K_i) = \bigcup_j(\alpha\cdot L_j),
	\]
	and hence $\alpha\cdot N$ is a union of parts of $\mathcal{K}$ and of $\mathcal{L}$.  Consequently, $\alpha\cdot(S\vee T)\geq S,T$, and we conclude as above that $\alpha\cdot(S\vee T)=S\vee T$.
\end{proof}

There is a stronger property than that of being characteristic, which we will need in order to distinguish which supercharacter theories of a normal subgroup can be extended to the larger group.  Let $A$ be a group which acts on a finite group $G$ via automorphisms.  Then (see \cite[6.32ff]{isaacschar}), the actions of $A$ on $G$ and $\mathrm{Irr}(G)$ contain the same number of orbits.  More generally, if $S$ is any characteristic supercharacter theory of $G$, then $A$ permutes the superclasses and supercharacters of $S$ in such a way that the number of orbits of superclasses and of supercharacters are equal.  If $A$ fixes each superclass (equivalently, $A$ fixes each supercharacter), then we call $S$ an \emph{$A$-invariant} supercharacter theory, and we write $A\mathrm{InvSCT}(G)$ for the set of all $A$-invariant supercharacter theories of $G$.  There is a unique minimal element of $A\mathrm{InvSCT}(G)$, denoted $m_A(G)$, whose superclass and supercharacter partitions are the orbits of the actions of $A$ on $G$ and $\mathrm{Irr}(G)$, respectively.  Hence, the $A$-invariant supercharacter theories are simply those whose superclass and supercharacter partitions are unions of $A$-orbits.

\subsection{Products of supercharacter theories}\label{subsec:Products}

In this section, we will discuss three different constructions for products of two supercharacter theories: the direct product, the $\ast$-product, and the $\Delta$-product.  While only the $\ast$-product will be used in the classification $\mathrm{SCT}(D_{2n})$, the others are necessary to state the classification of $\mathrm{SCT}(\mathbb{Z}_n)$.

The first and simplest product construction is the \emph{direct product}.  If $H$ and $K$ are two finite groups, $S=(\mathcal{K},\mathcal{X})\in\mathrm{SCT}(H)$ and $T=(\mathcal{L},\mathcal{Y})\in\mathrm{SCT}(K)$, then the direct product of $S$ and $T$, denoted $S\times T$, is the supercharacter theory of $H\times K$ with superclass partition
\[
	\mathcal{K}\times\mathcal{L} = \{K\times L\,:\,K\in\mathcal{K},\,L\in\mathcal{L}\}
\]
and supercharacter partition
\[
	\mathcal{X}\times\mathcal{Y} = \{X\times Y\,:\,X\in\mathcal{X},\,Y\in\mathcal{Y}\}.
\]
That $S\times T$ is a valid supercharacter theory of $H\times K$ is an easy exercise.

Next, let $G$ be a finite group with normal subgroup $N$.  Then $G$ acts on $N$ via automorphisms and the partition of $N$ into $G$-orbits is the superclass partition of the minimal $G$-invariant supercharacter theory $m_G(N)$ of $N$.  The superclasses of $m_G(N)$ are simply the $G$-conjugacy classes which are subsets of $N$.  Thus, $G$-invariant supercharacter theories of $N$ are those whose superclasses are unions of $G$-conjugacy classes.  Therefore, the most na\"{i}ve putative method of combining a $G$-invariant supercharacter theory $S$ of $N$ with a supercharacter theory $T$ of $G/N$ could be to pull back the nonidentity superclasses of $T$ and combine them with the superclasses of $S$.  It is perhaps surprising that this method does in fact produce a supercharacter theory of $G$, which Hendrickson calls the \emph{$\ast$-product} of $S$ and $T$, and denotes $S\ast_NT$.

\begin{prop}\cite[Theorem 3.5]{Hendrickson2008}
	Let $S=(\mathcal{K},\mathcal{X})$ be a $G$-invariant supercharacter theory of $N$ and $T = (\mathcal{L},\mathcal{Y})$ be a supercharacter theory of $G/N$.  Then there is a supercharacter theory of $G$ given by $S\ast_NT = (\mathcal{M},\mathcal{Z})$, whose superclass partition is
	\begin{equation}\label{eq:star_superclasses}
		\mathcal{M} = \mathcal{K} \cup \Big\{\bigcup_{gN\in L}gN\,:\,L\in\mathcal{L}\setminus\{\{eN\}\}\Big\}
	\end{equation}
	and whose supercharacter partition is
	\begin{equation}\label{eq:star_supercharacters}
		\mathcal{Z} = \big\{X^G\,:\,X\in\mathcal{X}\setminus\{\mathbf{1}_N\}\big\} \cup \mathcal{Y},
	\end{equation}
	where $X^G = \{\mathrm{Irr}(G\vert\chi):\chi\in X\}$ and the elements of $\mathrm{Irr}(G/N)$ are identified as elements of $\mathrm{Irr}(G)$ through inflation.
\end{prop}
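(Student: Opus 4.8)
The plan is to verify directly that the pair $(\mathcal M,\mathcal Z)$ defined by \eqref{eq:star_superclasses} and \eqref{eq:star_supercharacters} satisfies the definition of a supercharacter theory of $G$. Write $\pi\colon G\to G/N$ for the quotient map. Since $S$ is $G$-invariant, each $K\in\mathcal K$ is a union of $G$-conjugacy classes contained in $N$; and for each $L\in\mathcal L$ the set $\pi^{-1}(L)=\bigcup_{gN\in L}gN$ is a union of $G$-conjugacy classes, because $L$ is a union of conjugacy classes of $G/N$ and preimages of conjugacy classes under $\pi$ are unions of conjugacy classes of $G$. As the parts of $\mathcal K$ partition $N=\pi^{-1}(\{eN\})$ while the sets $\pi^{-1}(L)$ with $L\neq\{eN\}$ partition $G\setminus N$, the collection $\mathcal M$ is a partition of $G$ into unions of conjugacy classes, and it contains $\{e\}$ because $\{e\}\in\mathcal K$.

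Next I would check that $\mathcal Z$ is a partition of $\mathrm{Irr}(G)$ using Clifford theory. Every $\chi\in\mathrm{Irr}(G)$ lies over a well-defined $G$-orbit of characters in $\mathrm{Irr}(N)$; if that orbit is $\{\mathbf 1_N\}$ then $\chi$ is an inflation of a character of $G/N$, and otherwise the orbit is contained in a unique part $X$ of $\mathcal X$ (here $G$-invariance of $S$ is essential), so that $\chi\in\bigcup_{\theta\in X}\mathrm{Irr}(G|\theta)=X^G$. The sets $X^G$ for distinct parts $X\neq\{\mathbf 1_N\}$ are nonempty and pairwise disjoint, and each is disjoint from $\mathrm{Irr}(G/N)$, which contains every part of $\mathcal Y$; hence $\mathcal Z$ is a partition of $\mathrm{Irr}(G)$ and $\abs{\mathcal Z}=(\abs{\mathcal X}-1)+\abs{\mathcal Y}=(\abs{\mathcal K}-1)+\abs{\mathcal L}=\abs{\mathcal M}$.

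The substantive step is to show that each $\sigma_Z$ is constant on the parts of $\mathcal M$. For $Z=Y\in\mathcal Y$ this is straightforward: $\sigma_Y$ is an inflation, so it takes the value $\sigma_Y(e)$ on all of $N$, hence on each $K\in\mathcal K$, and on $\pi^{-1}(L)$ it agrees with the value of the $T$-supercharacter $\sigma_Y$ on the $T$-superclass $L$. For $Z=X^G$ with $X\neq\{\mathbf 1_N\}$, I would establish the formula
\[
\sigma_{X^G}=[G:N]\,\sigma_X\quad\text{on }N,\qquad \sigma_{X^G}=0\quad\text{on }G\setminus N.
\]
To prove it, note that $e_X=\sum_{\theta\in X}e_\theta$, the sum of the block idempotents of $\mathbb C N$ attached to $X$, is $G$-invariant and therefore lies in $Z(\mathbb C G)$; it acts as the identity on exactly the simple modules $\psi$ with $\psi\in X^G$ and as zero on the rest, so $\sigma_{X^G}(g)=\sum_{\psi\in X^G}\psi(e)\psi(g)$ equals $\abs G$ times the coefficient of $g^{-1}$ in $e_X$. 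Since $e_X$ is supported on $N$, this coefficient vanishes for $g\notin N$, and for $g\in N$ it equals $\frac{1}{\abs N}\sum_{\theta\in X}\theta(e)\theta(g)$, giving $[G:N]\sigma_X(g)$. Granting the formula, $\sigma_{X^G}$ is zero on each $\pi^{-1}(L)$ with $L\neq\{eN\}$ and is a constant multiple of $\sigma_X$ on $N$, hence constant on each $K\in\mathcal K$ because $\sigma_X$ is an $S$-supercharacter. This verifies condition (3) and completes the proof.

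The main obstacle is the central-idempotent identity for $\sigma_{X^G}$ (and, implicitly, the Clifford-theoretic input that $\mathcal Z$ is a partition); everything else is bookkeeping. A variant worth noting: instead of checking (3) by hand one can apply Lemma \ref{lem:Hen_sums_span}, since short computations give $\underline K\,\underline{\pi^{-1}(L)}=\abs K\,\underline{\pi^{-1}(L)}$ and $\underline{\pi^{-1}(L)}\,\underline{\pi^{-1}(L')}=\abs N\sum_{L''\in\mathcal L}c_{L''}\,\underline{\pi^{-1}(L'')}$ --- where $\underline L\,\underline{L'}=\sum_{L''}c_{L''}\,\underline{L''}$ in $Z(\mathbb C[G/N])$ and $\underline{\pi^{-1}(\{eN\})}=\underline N=\sum_{K\in\mathcal K}\underline K$ --- so $\mathrm{span}\{\underline M:M\in\mathcal M\}$ is a subalgebra of $Z(\mathbb C G)$; then $\mathcal M$ is the superclass partition of some supercharacter theory $(\mathcal M,\mathcal Z_0)$, and the constancy of $\sigma_Y$ and $\sigma_{X^G}$ on $\mathcal M$ forces $\mathcal Z_0\le\mathcal Z$, hence $\mathcal Z_0=\mathcal Z$ by the count above.
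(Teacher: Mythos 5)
The paper does not prove this proposition at all: it is quoted from \cite[Theorem 3.5]{Hendrickson2008} and used as a black box, so there is no internal proof to compare against. Your direct verification is correct and self-contained. The bookkeeping steps (that $\mathcal{M}$ is a partition of $G$ into unions of conjugacy classes containing $\{e\}$, and that Clifford theory makes $\mathcal{Z}$ a partition of $\mathrm{Irr}(G)$ of the right cardinality) are fine, and the substantive step is exactly right: since each part $X$ is $G$-stable, $e_X=\sum_{\theta\in X}e_\theta$ is a central idempotent of $\mathbb{C}G$ supported on $N$, whence $\sigma_{X^G}$ vanishes off $N$ and equals $[G:N]\sigma_X$ on $N$; this gives constancy on all parts of $\mathcal{M}$ at once. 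Two small points worth making explicit: you implicitly use the standard fact (dual to $\{e\}\in\mathcal{K}$) that $\{\mathbf{1}_N\}$ is itself a part of $\mathcal{X}$ --- otherwise the set $X^G$ for the part containing $\mathbf{1}_N$ would meet $\mathrm{Irr}(G/N)$ and $\mathcal{Z}$ would not be a partition; and in the partition argument you should note that a $G$-orbit in $\mathrm{Irr}(N)$ is either contained in or disjoint from each part $X$ precisely because $G$-invariance forces each part to be a union of orbits. Your alternative route via Lemma \ref{lem:Hen_sums_span} (checking that the superclass sums span a subalgebra of $Z(\mathbb{C}G)$ and then pinning down $\mathcal{Z}$ by the cardinality count) is also valid and arguably cleaner, since it isolates the character-theoretic input to the single identity for $\sigma_{X^G}$.
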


\begin{dfn}
	Let $G$ be a finite group and let $N$ be a normal subgroup.
	\begin{enumerate}[label=(\alph{*})]
		\item We say a supercharacter theory of $G$ \emph{factors over $N$} if it can be written as a $\ast$-product of a $G$-invariant supercharacter theory of $N$ with a supercharacter theory of $G/N$.
		\item The unique maximal supercharacter theory of $G$ which factors over $N$ is $M(N)\ast_NM(G/N)$ and we denote this supercharacter theory by $MM_N(G)$.
		\item The unique minimal supercharacter theory of $G$ which factors over $N$ is $m_G(N)\ast_Nm(G/N)$ and we denote this supercharacter theory by $mm_N(G)$.
	\end{enumerate}
\end{dfn}

Finally, there is a construction in \cite{Hendrickson2008} known as the $\Delta$-product, which we will summarize here.  If $S=(\mathcal{K},\mathcal{X})$ is a supercharacter theory of a finite group $G$ and $N$ is a subgroup, we call $N$ \emph{$S$-normal} if it is a union of $S$-superclasses (or equivalently, if $\mathrm{Irr}(G/N)$ is a union of parts of $\mathcal{X}$).  In this setting, we can define two new supercharacter theories as follows.  The first is the restricted supercharacter theory $S_N\in\mathrm{SCT}(N)$, whose superclasses are merely those which lie in $N$:
\begin{equation}\label{eq:restricted_sct}
	S_N = \Big(\big\{K\in\mathcal{K}\,:\,K\subseteq N\big\},\,\big\{X_N\,:\,X\in\mathcal{X},\,X\not\subseteq\mathrm{Irr}(G/N)\big\}\cup\big\{\{\mathbf{1}_G\}\big\}\Big),
\end{equation}
where $X_N$ denotes the set of irreducible constituents of $\mathrm{Res}^G_N(\sigma_X)$.  The second is the deflated supercharacter theory $S^{G/N}\in\mathrm{SCT}(G/N)$, whose supercharacter blocks are merely those which lie in $\mathrm{Irr}(G/N)$:
\begin{equation}\label{eq:deflated_sct}\begin{split}
	S^{G/N} = \Big(&\big\{\{N\}\big\}\cup\big\{\{gN\,:\,g\in K\}\,:\,K\in\mathcal{K},\,K\not\subseteq N\big\}, \\
	&\big\{X\in\mathcal{X}\,:\,X\subseteq\mathrm{Irr}(G/N)\big\}\Big).
\end{split}\end{equation}

\begin{prop}\cite[Theorem 4.1]{Hendrickson2008}
	Let $G$ be a finite group with normal subgroups $N\mathrel{\lhd} M\mathrel{\lhd} G$, let $S=(\mathcal{K},\mathcal{X})\in G\mathrm{InvSCT}(M)$ and let $T=(\mathcal{L},\mathcal{Y})\in\mathrm{SCT}(G/N)$.  Suppose
	\begin{enumerate}[label=(\alph{*})]
		\item $N$ is $S$-normal,
		\item $M/N$ is $T$-normal, and
		\item $S^{M/N} = T_{M/N}$,.
	\end{enumerate}
	Then we can form a supercharacter theory of $G$ called the $\Delta$-product of $S$ and $T$, denoted $S\mathrel{\Delta}T=(\mathcal{M},\mathcal{Z})$, whose superclass partition is
	\[
		\mathcal{M} = \mathcal{K}\cup\bigg\{\bigcup_{gN\in L}gN\,:\,L\in\mathcal{L},\,L\not\subseteq M/N\bigg\},
	\]
	and whose supercharacter partition is
	\[
		\mathcal{Z} = \mathcal{Y}\cup\big\{X^G\,:\,X\in\mathcal{X},\,X\not\subseteq\mathrm{Irr}(M/N)\big\},
	\]
	where $X^G$ denotes the set of constituents of $\mathrm{Ind}^G_M(\sigma_X)$.
\end{prop}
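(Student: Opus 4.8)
The plan is to verify that the putative superclass partition $\mathcal{M}$ in the statement genuinely arises from a supercharacter theory by producing the matching supercharacter partition $\mathcal{Z}$ and checking the three axioms, leaning heavily on Lemma~\ref{lem:Hen_sums_span} to reduce the bookkeeping. Specifically, I would first observe that $\mathcal{M}$ is a partition of $G$ into unions of $G$-conjugacy classes: the parts of $\mathcal{K}$ are such unions because $S\in G\mathrm{InvSCT}(M)$, and each block $\bigcup_{gN\in L}gN$ for $L\not\subseteq M/N$ is a union of cosets of $N$, hence a union of conjugacy classes of $G$ since $N\mathrel{\lhd} G$. The hypotheses~(a) and~(b) guarantee that the "boundary" between the two families of blocks is coherent — $M$ is a union of parts of $\mathcal{M}$ because $N$ is $S$-normal (so $M/N$ being $T$-normal means the cosets of $N$ inside $M$ are exactly accounted for by $\mathcal{K}$ on one side and by the $L\subseteq M/N$ on the deflated side), so there is no overlap and $\mathcal{M}$ covers all of $G$. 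Axiom~(2), namely $\{e\}\in\mathcal{M}$, is immediate since $\{e\}\in\mathcal{K}$.

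Next I would establish that $\mathrm{span}\{\underline{M}:M\in\mathcal{M}\}$ is a subalgebra of $Z(\mathbb{C} G)$. Decompose this span as $V_1\oplus V_2$ where $V_1=\mathrm{span}\{\underline{K}:K\in\mathcal{K}\}$ and $V_2$ is spanned by the sums $\underline{L^\sharp}:=\sum_{gN\in L}\underline{gN}$ for $L\in\mathcal{L}$, $L\not\subseteq M/N$. Products within $V_1$ stay in $V_1$ because $S$ is a supercharacter theory of $M$ and the relevant structure constants are those of $Z(\mathbb{C} M)$ sitting inside $Z(\mathbb{C} G)$ (here one uses that $M$ is $\mathcal{M}$-normal and that $G$-invariance makes the $\mathcal{K}$-span a subalgebra of $Z(\mathbb{C} M)^G\subseteq Z(\mathbb{C} G)$). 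Products within $V_2$ reduce, via the ring homomorphism $\mathbb{C} G\to\mathbb{C}(G/N)$, to products of superclass sums of $T$ in $Z(\mathbb{C}(G/N))$, which are closed since $T\in\mathrm{SCT}(G/N)$; the only subtlety is that a product $\underline{L_1^\sharp}\,\underline{L_2^\sharp}$ may have support meeting $M$, and its component there is a union-of-cosets-of-$N$ element of $Z(\mathbb{C} M)$ that must be expressible in $V_1$ — this is exactly where the compatibility condition~(c), $S^{M/N}=T_{M/N}$, is used, since it forces the $T$-block structure on $M/N$ to agree with the $S$-block structure, so any such component lies in $V_1$. The cross terms $\underline{K}\,\underline{L^\sharp}$ are handled similarly: $\underline{K}\in Z(\mathbb{C} M)$ and $\underline{L^\sharp}$ is supported off $M$ (on cosets outside $M/N$), so their product is supported on a union of $N$-cosets and, pushing to $G/N$, lies in the $T$-span; pulling back, condition~(c) again ensures the result lies in $V_1\oplus V_2$.

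Having shown $\mathcal{M}$ is a superclass partition, Lemma~\ref{lem:Hen_sums_span} yields an abstract supercharacter theory $(\mathcal{M},\mathcal{Z}_0)$; it remains to identify $\mathcal{Z}_0$ with the claimed $\mathcal{Z}=\mathcal{Y}\cup\{X^G:X\in\mathcal{X},\,X\not\subseteq\mathrm{Irr}(M/N)\}$. For this I would check directly that each proposed supercharacter is constant on the parts of $\mathcal{M}$ and that the count matches $\abs{\mathcal{M}}$. For $Y\in\mathcal{Y}$, the character $\sigma_Y$ is inflated from $G/N$, hence constant on $N$-cosets, hence constant on each block $\bigcup_{gN\in L}gN$ precisely because $\sigma_Y$ is a $T$-supercharacter; and it is constant on each $K\in\mathcal{K}\subseteq M$ by condition~(c) (the restriction of $\sigma_Y$ to $M/N$ is a superclass function for $S^{M/N}=T_{M/N}$, hence its inflation-restriction to $M$ is constant on the $K$'s). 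For $X\in\mathcal{X}$ with $X\not\subseteq\mathrm{Irr}(M/N)$, the induced character $\sigma_{X^G}=\mathrm{Ind}^G_M(\sigma_X)$ vanishes off $M$ (since no conjugate of a block outside $M/N$ lands in $M$ — this uses normality of $M$ and that $X$ is not inflated from $M/N$), so it is automatically constant (equal to $0$) on the second family of $\mathcal{M}$-blocks, while on $M$ it is a $\mathbb{Z}$-combination of $G$-conjugates of $\sigma_X$, hence constant on the $G$-invariant classes $K$. Counting: $\abs{\mathcal{Z}}=\abs{\mathcal{Y}}+(\abs{\mathcal{X}}-\abs{\{X\subseteq\mathrm{Irr}(M/N)\}})=\abs{\mathcal{L}}+\abs{\mathcal{K}}-\abs{\mathcal{L}\cap(\text{blocks in }M/N)}$, which matches $\abs{\mathcal{M}}=\abs{\mathcal{K}}+\abs{\{L\not\subseteq M/N\}}$ by condition~(b) and the equality of orbit counts for the $G$-action recorded after Lemma~\ref{prop:char_is_lattice}. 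Since $(\mathcal{M},\mathcal{Z}_0)$ is the unique supercharacter theory with superclass partition $\mathcal{M}$, and $\mathcal{Z}$ satisfies all three axioms against $\mathcal{M}$, we get $\mathcal{Z}=\mathcal{Z}_0$.

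I expect the main obstacle to be the subalgebra verification for the mixed and within-$V_2$ products, where one must show that components landing in $\mathbb{C} M$ are expressible in terms of the $\mathcal{K}$-superclass sums; this is the precise point at which all three hypotheses~(a)–(c) enter in concert, and getting the deflation/restriction dictionary exactly right — so that $S^{M/N}=T_{M/N}$ does the required work — is the delicate part. A clean way to organize this is to work with the idempotents or with the algebra $\mathrm{scf}$ directly rather than with structure constants, but either way the compatibility condition~(c) is doing the essential gluing.
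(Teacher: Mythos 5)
This proposition is imported verbatim from \cite[Theorem 4.1]{Hendrickson2008}; the paper you are reading gives no proof of it, so there is no in-paper argument to compare yours against. Judged on its own, your outline is the natural one and its skeleton is sound: reduce to the superclass-sum criterion of Lemma~\ref{lem:Hen_sums_span}, split the span as $V_1\oplus V_2$, push the $V_2$ and mixed products through $\mathbb{C}G\to\mathbb{C}(G/N)$, use $T\in\mathrm{SCT}(G/N)$ for closure there, and use (a) and (c) to absorb the components that land inside $M$ back into $V_1$; then identify $\mathcal{Z}$ by constancy plus a count. You also correctly locate where (c) does the gluing.

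Two points need repair. First, your justification that each block $\bigcup_{gN\in L}gN$ is a union of $G$-conjugacy classes is wrong as stated: a single coset of a normal subgroup need not be a union of conjugacy classes (e.g.\ $(12)V_4$ in $S_4$). The correct reason is that $L$ is a union of $G/N$-conjugacy classes and the preimage under $G\to G/N$ of a conjugacy class is a union of conjugacy classes; alternatively this follows for free once the subalgebra criterion is verified. Second, the step ``pushing to $G/N$, [the mixed product] lies in the $T$-span'' silently assumes that $\pi(\underline{K})$ lies in $\mathrm{span}\{\underline{L}:L\in\mathcal{L}\}$, i.e.\ that $xN\mapsto\abs{K\cap xN}$ is constant on the $T_{M/N}$-superclasses. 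This is true but requires an argument: since $N$ is $S$-normal, $\underline{K}\,\underline{N}$ lies in the $\mathcal{K}$-span, so $\abs{K\cap xN}$ is constant on $S$-superclasses and on $N$-cosets, hence descends to a function constant on $S^{M/N}$-superclasses, which equal the $T_{M/N}$-superclasses by (c). The same computation shows $KN$ is a union of $S$-superclasses, which you need for the $L\subseteq M/N$ components of the $V_2\cdot V_2$ products. Finally, in the last paragraph note that $\sigma_{X^G}\neq\mathrm{Ind}^G_M(\sigma_X)$ in general (the coefficients differ), so you should invoke the Diaconis--Isaacs criterion that a partition pair with $\abs{\mathcal{Z}}=\abs{\mathcal{M}}$ is a supercharacter theory as soon as each block of $\mathcal{Z}$ admits \emph{some} character with exactly that constituent set constant on the parts of $\mathcal{M}$, rather than checking $\sigma_{X^G}$ itself; and the disjointness of the sets $X^G$ from each other and from $\mathrm{Irr}(G/N)$ (which your count presupposes) uses the $G$-invariance of $S$ together with hypothesis (a).
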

One can check that if $M=N$, then the $\Delta$-prodcut reduces to the $\ast$-product.

Now that we have established the necessary terminology, we can state the classification of $\mathrm{SCT}(\mathbb{Z}_n)$.

\begin{thm}\label{thm:Zn_classification}\cite[Theorem 9.1]{Hendrickson2012}
	Let $G$ be a finite cyclic group, and let $S\in\mathrm{SCT}(G)$ be such that $S\neq M(G)$.  Then at least one of the following is true:
	\begin{enumerate}
		\item $S = m_A(G)$ for some subgroup $A$ of $\mathrm{Aut}(G)$;
		\item $S$ is of the form $(\mathcal{K}\times\mathcal{L},\mathcal{X}\times\mathcal{Y})$ for some decomposition $G = H\times K$ and supercharacter theories $(\mathcal{K},\mathcal{X})\in\mathrm{SCT}(H)$ and $(\mathcal{L},\mathcal{Y})\in\mathrm{SCT}(K)$;
		\item $S$ is a nontrivial $\Delta$-product.
	\end{enumerate}
\end{thm}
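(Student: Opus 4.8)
The plan is to translate the statement into the language of Schur rings and then invoke the classification of Schur rings over cyclic groups due to Leung and Man \cite{Leung1996,Leung1998}. The first step is to set up the dictionary between supercharacter theories of an abelian group $G$ and Schur rings over $G$. If $S=(\mathcal{K},\mathcal{X})\in\mathrm{SCT}(G)$, then by Lemma \ref{lem:Hen_sums_span} the span $A_S:=\mathrm{span}\{\underline{K}:K\in\mathcal{K}\}$ is a subalgebra of $Z(\mathbb{C}G)=\mathbb{C}G$ containing $\underline{\{e\}}$; the only further axiom of a Schur ring is that $\mathcal{K}$ be closed under $K\mapsto K^{-1}$, and this follows from axiom (3): since $\overline{\sigma_X}=\sum_{\chi\in X}\overline{\chi}$ is again a superclass function, comparing coefficients of irreducibles against the basis $\{\sigma_{X'}\}$ of $\mathrm{scf}_S(G)$ shows $\{\overline{\chi}:\chi\in X\}$ is a union of parts of $\mathcal{X}$ for each $X$, and a short refinement argument then gives that $\mathcal{X}$, and dually (via Proposition \ref{prop:Hen_order}) $\mathcal{K}$, is stable under conjugation/inversion. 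Conversely, every Schur ring over $G$ arises as such an $A_S$, and $M(G)$ corresponds to the two-dimensional Schur ring $\mathrm{span}\{\underline{\{e\}},\underline{G\setminus\{e\}}\}$.

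Next I would recall the Leung--Man theorem in the form needed: every Schur ring over a cyclic group $\mathbb{Z}_n$ is either (i) the orbit Schur ring of a subgroup $A\le\mathrm{Aut}(\mathbb{Z}_n)$, i.e.\ $A_S$ has as its distinguished basis the sums over the $A$-orbits on $G$; (ii) a nontrivial tensor (``dot'') product $A_1\otimes A_2$ associated to a factorization $\mathbb{Z}_n=\mathbb{Z}_{n_1}\times\mathbb{Z}_{n_2}$ into coprime-order direct factors; (iii) a nontrivial generalized wreath (``wedge'') product relative to a section $N\trianglelefteq M\trianglelefteq\mathbb{Z}_n$; or (iv) the Schur ring $\mathrm{span}\{\underline{\{e\}},\underline{G\setminus\{e\}}\}$ excluded in the statement. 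For composite $n$ this last one is genuinely not of type (i), since there $m_{\mathrm{Aut}}(G)\ne M(G)$; this is exactly why the hypothesis $S\ne M(G)$ is needed (for $n$ prime it is of type (i) and no exclusion would be necessary).

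The technical core is to match each of the constructions (i)--(iii) with the three alternatives. For (i): the orbit Schur ring of $A$ has superclass partition equal to the partition of $G$ into $A$-orbits, which is by definition the superclass partition of $m_A(G)$, giving alternative (1). For (ii): since $\underline{K\times L}=\underline{K}\,\underline{L}$, the tensor product of Schur rings over $\mathbb{Z}_{n_1}$ and $\mathbb{Z}_{n_2}$ corresponds exactly to the direct product of the associated supercharacter theories over $G=\mathbb{Z}_{n_1}\times\mathbb{Z}_{n_2}$, giving alternative (2). For (iii): I would show that for abelian $G$, where every supercharacter theory of $M$ is automatically $G$-invariant, a wedge-product Schur ring over $G$ relative to $N\trianglelefteq M\trianglelefteq G$ is the $\Delta$-product $S_M\mathrel{\Delta}S^{G/N}$ of the restricted theory $S_M$ with the deflated theory $S^{G/N}$; here the compatibility conditions (a)--(c) of the $\Delta$-product proposition hold automatically because $S_M$ and $S^{G/N}$ are the restriction and deflation of the single theory $S$, and in particular condition (c), $(S_M)^{M/N}=(S^{G/N})_{M/N}$, is precisely the ``agreement on the section $M/N$'' characterizing the wedge product. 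This gives alternative (3), a nontrivial wedge product matching a nontrivial $\Delta$-product (the degenerate case $M=N$ being the $\ast$-product, which the hypothesis on $S$ still allows).

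Assembling these: if $S\ne M(G)$ then $A_S$ is of type (i), (ii), or (iii) by Leung--Man, and correspondingly $S$ satisfies (1), (2), or (3). The step I expect to be the main obstacle is the identification in type (iii): reconciling the coset-by-coset description of a generalized wreath product with the three hypotheses and the explicit superclass/supercharacter formulas of the $\Delta$-product, and treating the boundary cases ($N=1$, $M=G$, or a trivial direct factor) carefully enough that no supercharacter theory other than $M(G)$ itself slips through. A secondary subtlety is to ensure the notions of ``nontrivial'' on the Schur-ring and supercharacter-theory sides correspond, so that alternatives (2) and (3) are not made vacuous by degenerate decompositions.
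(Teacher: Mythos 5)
The paper offers no proof of this theorem: it is quoted verbatim from \cite[Theorem~9.1]{Hendrickson2012}, whose proof proceeds exactly as you outline, via the bijection between supercharacter theories of an abelian group and Schur rings and the Leung--Man classification of Schur rings over cyclic groups, with orbit, dot-product, and wedge-product Schur rings matched to alternatives (1), (2), and (3) respectively. Your reconstruction is faithful to that route, including the correct observation that the rank-two (trivial) Schur ring is the reason for excluding $S=M(G)$.
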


As it turns out, the $\ast$-product provides an embedding of a sublattice of $\mathrm{SCT}(\mathbb{Z}_n)$ into $\mathrm{SCT}(D_{2n})$.  We will see in subsequent sections that the image of this embedding determines the structure of $\mathrm{SCT}(D_{2n})$.

\begin{lem}\label{lem:starproductlatticeinjection}\cite[Lemma 2.2]{Benidt2014}
	Let $G$ be a group and let $N$ be a normal subgroup of $G$.  Then the map
	\[
		f : G\mathrm{InvSCT}(N)\times\mathrm{SCT}(G/N) \to \mathrm{SCT}(G); \; (S,T) \mapsto S\ast_NT
	\]
	is an injection of lattices whose image is the interval $[mm_N(G),MM_N(G)]$.
\end{lem}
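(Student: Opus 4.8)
The plan is to work entirely with superclass partitions, using the facts recorded above that a supercharacter theory is determined by its superclass partition and that the refinement order on $\mathrm{SCT}(G)$ is exactly the refinement order on superclass partitions. First I would exhibit a candidate inverse to $f$ on its putative image, namely $U \mapsto (U_N, U^{G/N})$ built from the restricted and deflated supercharacter theories of \eqref{eq:restricted_sct} and \eqref{eq:deflated_sct}; this simultaneously yields injectivity of $f$ and one containment for the image. Then I would verify that $f$ preserves and reflects the order, and finally that its image is precisely $[mm_N(G), MM_N(G)]$. I would also note at the outset that $G\mathrm{InvSCT}(N)$ equals the interval $[m_G(N), M(N)]$ in $\mathrm{SCT}(N)$ (a supercharacter theory of $N$ has superclasses that are unions of $G$-orbits if and only if $m_G(N) \le S$, and $M(N)$ is $G$-invariant since $\{e\}$ and $N \setminus \{e\}$ are unions of $G$-conjugacy classes), so $G\mathrm{InvSCT}(N)$ is a sublattice and the source of $f$ is a lattice.

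The structural observation to establish first is this: if $S = (\mathcal{K},\mathcal{X}) \in G\mathrm{InvSCT}(N)$, $T = (\mathcal{L},\mathcal{Y}) \in \mathrm{SCT}(G/N)$, and $S \ast_N T = (\mathcal{M},\mathcal{Z})$, then by \eqref{eq:star_superclasses} the parts of $\mathcal{M}$ contained in $N$ are exactly the parts of $\mathcal{K}$, while every other part of $\mathcal{M}$ has the form $\bigcup_{gN \in L} gN$ with $\{eN\} \neq L \in \mathcal{L}$, hence is a union of nonidentity cosets of $N$ and is disjoint from $N$. Thus $N$ is a union of $S \ast_N T$-superclasses, that is, $N$ is $(S\ast_N T)$-normal, and unwinding \eqref{eq:restricted_sct} and \eqref{eq:deflated_sct} I would read off $(S\ast_N T)_N = S$ and $(S\ast_N T)^{G/N} = T$. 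This already gives injectivity of $f$. For monotonicity I would argue directly from \eqref{eq:star_superclasses}: if $S \le S'$ and $T \le T'$, then each part of $\mathcal{K}$ sits in a part of $\mathcal{K}'$, and each part $L \neq \{eN\}$ of $\mathcal{L}$ sits in a part $L' \neq \{eN\}$ of $\mathcal{L}'$ (a part of $\mathcal{L}'$ containing a nonidentity coset, or containing two cosets, cannot be $\{eN\}$), whence $\mathcal{M} \le \mathcal{M}'$ and $f(S,T) \le f(S',T')$. Conversely, if $f(S,T) \le f(S',T')$, then restricting attention to the superclasses lying in $N$ gives $\mathcal{K} \le \mathcal{K}'$ and deflating the superclasses lying in $G \setminus N$ gives $\mathcal{L} \le \mathcal{L}'$, so $S \le S'$ and $T \le T'$. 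Hence $f$ is an order embedding.

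To identify the image, the easy containment follows from monotonicity together with $f(m_G(N), m(G/N)) = mm_N(G)$ and $f(M(N), M(G/N)) = MM_N(G)$, using $m_G(N) \le S \le M(N)$ and $m(G/N) \le T \le M(G/N)$. For the reverse containment I would take $U = (\mathcal{P},\mathcal{Q})$ with $mm_N(G) \le U \le MM_N(G)$ and extract the shape of $\mathcal{P}$: since $U \le MM_N(G) = M(N)\ast_N M(G/N)$, whose superclasses are $\{e\}$, $N \setminus \{e\}$, $G \setminus N$, every part of $\mathcal{P}$ lies in $N$ or in $G \setminus N$, so $N$ is $U$-normal; and since $mm_N(G) \le U$, every part of $\mathcal{P}$ inside $N$ is a union of $G$-conjugacy classes while every part inside $G \setminus N$ is a union of nonidentity cosets of $N$. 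Consequently $S := U_N$ is a well-defined supercharacter theory of $N$ whose superclasses are unions of $G$-conjugacy classes, so $S \in G\mathrm{InvSCT}(N)$, and $T := U^{G/N} \in \mathrm{SCT}(G/N)$ is well defined. Computing the superclass partition of $S \ast_N T$ from \eqref{eq:star_superclasses}: its parts inside $N$ are the parts of $\mathcal{P}$ inside $N$, and for a nonidentity superclass $L = \{gN : g \in K\}$ of $U^{G/N}$, with $K \in \mathcal{P}$ and $K \not\subseteq N$, one has $\bigcup_{gN \in L} gN = K$ because $K$ is already a union of cosets; so the superclass partition of $S \ast_N T$ is exactly $\mathcal{P}$, forcing $U = S \ast_N T = f(S,T)$. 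Since $[mm_N(G), MM_N(G)]$ is an interval in the lattice $\mathrm{SCT}(G)$ it is a sublattice, and an order embedding onto a sublattice automatically preserves meets and joins; thus $f$ is an injection of lattices onto that interval.

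The hard part will be the reverse-containment step for the image: extracting from the single chain $mm_N(G) \le U \le MM_N(G)$ the clean dichotomy that the superclasses of $U$ are either unions of $G$-conjugacy classes inside $N$ or unions of cosets inside $G \setminus N$ — which is precisely the data needed to form $U_N$ and $U^{G/N}$ and to recover $U$ from formula \eqref{eq:star_superclasses}. Once that dichotomy is in hand, everything else is bookkeeping with the explicit superclass formula.
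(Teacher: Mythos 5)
The paper does not prove this lemma; it is imported verbatim from Benidt--Hall--Hendrickson as \cite[Lemma 2.2]{Benidt2014}, so there is no in-paper argument to compare yours against. Judged on its own, your proof is correct and self-contained given what Section 2 records: the fact that $\mathcal{K}$ determines $\mathcal{X}$ lets you work purely with superclass partitions, the computation $(S\ast_NT)_N=S$ and $(S\ast_NT)^{G/N}=T$ from \eqref{eq:star_superclasses}, \eqref{eq:restricted_sct}, \eqref{eq:deflated_sct} gives injectivity, and your two-sided squeeze $mm_N(G)\le U\le MM_N(G)$ correctly yields the dichotomy (superclasses of $U$ lie in $N$ or in $G\setminus N$, are unions of $G$-classes in the first case and of nonidentity cosets in the second) needed to recover $U$ as $U_N\ast_N U^{G/N}$. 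Two small remarks. First, the step ``an order embedding onto a sublattice automatically preserves meets and joins'' is true but should be written out (if $f(a)\vee f(b)=f(c)$, order reflection gives $c\ge a,b$, hence $f(a\vee b)\le f(c)$, while monotonicity gives the reverse inequality), and you should note that an interval in a lattice is closed under $\vee$ and $\wedge$, so both $[mm_N(G),MM_N(G)]$ and $G\mathrm{InvSCT}(N)=[m_G(N),M(N)]$ really are sublattices. Second, the ``hard part'' you flag in your closing paragraph is already fully handled in your second paragraph; nothing further is needed there.
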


The $\ast$-product also preserves the property of being characteristic.

\begin{lem}\label{lem:star_preserves_char}
	Let $N$ be a characteristic subgroup of $G$, let $S=(\mathcal{K},\mathcal{X})\in\mathrm{SCT}(N)$ and let $T=(\mathcal{L},\mathcal{Y})\in\mathrm{SCT}(G/N)$. Then $S\ast_NT$ is characteristic if and only if $S$ is $A$-characteristic and $T$ is $B$-characteristic, where $A\subseteq\mathrm{Aut}(N)$ and $B\subseteq\mathrm{Aut}(G/N)$ are the images of $\mathrm{Aut}(G)$ under the canonical maps $\mathrm{Res}^G_N:\mathrm{Aut}(G)\to\mathrm{Aut}(N)$ and $\mathrm{Proj}^G_{G/N}:\mathrm{Aut}(G)\to\mathrm{Aut}(G/N)$, respectively.
\end{lem}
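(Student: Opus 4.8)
The plan is to unwind the definition of ``characteristic'' for the $\ast$-product in terms of the $\mathrm{Aut}(G)$-action on superclasses, and to transfer that action across the decomposition of $\mathcal{M}$ into the part coming from $N$ and the part coming from $G/N$. First I would record the key structural fact, available because $N$ is characteristic: every $\alpha\in\mathrm{Aut}(G)$ restricts to an automorphism $\alpha_N := \mathrm{Res}^G_N(\alpha)\in\mathrm{Aut}(N)$ and descends to an automorphism $\bar\alpha := \mathrm{Proj}^G_{G/N}(\alpha)\in\mathrm{Aut}(G/N)$, and these two maps have images $A$ and $B$ respectively. The action of $\alpha$ on $G$ then commutes with the quotient map in the sense that $\alpha(gN) = \bar\alpha(gN)$ as subsets of $G$. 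Using the explicit description \eqref{eq:star_superclasses} of the superclass partition $\mathcal{M}$ of $S\ast_N T$, I would observe that $\alpha$ maps the ``$N$-part'' $\mathcal{K}$ of $\mathcal{M}$ to $\alpha_N\cdot\mathcal{K}$, and maps the ``quotient part'' $\{\bigcup_{gN\in L}gN : L\in\mathcal{L}\setminus\{\{eN\}\}\}$ to the analogous set built from $\bar\alpha\cdot\mathcal{L}$. Since $N$ itself (being a union of superclasses of $S$, as $S$ is a genuine supercharacter theory with $\{e\}\in\mathcal{K}$) is fixed setwise by $\alpha$, there is no mixing between the two parts, so $\alpha\cdot(S\ast_N T) = (\alpha_N\cdot S)\ast_N(\bar\alpha\cdot T)$.

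With that identity in hand, the proof becomes bookkeeping with the injectivity of the $\ast$-product. By Lemma~\ref{lem:starproductlatticeinjection}, the assignment $(S',T')\mapsto S'\ast_N T'$ is injective on $G\mathrm{InvSCT}(N)\times\mathrm{SCT}(G/N)$ — but here I need injectivity on the slightly larger domain $\mathrm{SCT}(N)\times\mathrm{SCT}(G/N)$ where $S$ ranges over all supercharacter theories of $N$. I would note that the superclass partition $\mathcal{M}$ of $S\ast_N T$ determines $\mathcal{K}$ (the parts contained in $N$) and $\mathcal{L}$ (collapse the remaining parts modulo $N$ and add $\{eN\}$), so the map is injective on this larger domain as well; this is the one place I would be a little careful, since the cited lemma is stated only for $G$-invariant $S$. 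Actually it's cleanest to observe directly from \eqref{eq:star_superclasses} that $\mathcal{K}$ and $\mathcal{L}$ are each recoverable from $\mathcal{M}$. Granting that, $S\ast_N T$ is characteristic iff $\alpha\cdot(S\ast_N T) = S\ast_N T$ for all $\alpha\in\mathrm{Aut}(G)$, iff $(\alpha_N\cdot S)\ast_N(\bar\alpha\cdot T) = S\ast_N T$ for all $\alpha$, iff $\alpha_N\cdot S = S$ and $\bar\alpha\cdot T = T$ for all $\alpha$, iff $\beta\cdot S = S$ for all $\beta\in A$ and $\gamma\cdot T = T$ for all $\gamma\in B$ — the last equivalence because $A$ and $B$ are by definition exactly the images of $\alpha\mapsto\alpha_N$ and $\alpha\mapsto\bar\alpha$. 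That last condition says precisely that $S$ is $A$-characteristic and $T$ is $B$-characteristic.

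The main obstacle, and the step I would spend the most care on, is establishing the factorization identity $\alpha\cdot(S\ast_N T) = (\alpha_N\cdot S)\ast_N(\bar\alpha\cdot T)$ cleanly at the level of partitions. It requires knowing that $\alpha$ sends a coset $gN$ to the coset $\bar\alpha(g)N$ (so that the union $\bigcup_{gN\in L}gN$ is sent to $\bigcup_{hN\in\bar\alpha\cdot L}hN$) and that $\alpha$ preserves $\mathcal{K}$-vs-quotient membership because $\alpha(N)=N$; both follow from $N$ being characteristic, but writing it so the two cases of \eqref{eq:star_superclasses} are handled without sleight of hand is the crux. Once that is done, everything else is the formal manipulation with images of homomorphisms described above, and I would also remark (or verify in passing) that $\alpha_N\cdot S$ is automatically $G$-invariant when $S$ is, so that $(\alpha_N\cdot S)\ast_N(\bar\alpha\cdot T)$ is a legitimate $\ast$-product — though this is not strictly needed if one has argued injectivity on the full $\mathrm{SCT}(N)\times\mathrm{SCT}(G/N)$ directly from \eqref{eq:star_superclasses}.
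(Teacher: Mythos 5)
Your proposal is correct, and it shares its computational core with the paper's proof --- namely the verification that, because $N$ is characteristic, $\alpha\in\mathrm{Aut}(G)$ sends each part of $\mathcal{K}$ to a part of $\mathrm{Res}^G_N(\alpha)\cdot\mathcal{K}$ and sends the preimage of each $L\in\mathcal{L}$ to the preimage of $\mathrm{Proj}^G_{G/N}(\alpha)(L)$, with no mixing between the two kinds of parts in \eqref{eq:star_superclasses}. Where you differ is in the packaging: you isolate this as a single equivariance identity $\alpha\cdot(S\ast_NT)=(\alpha_N\cdot S)\ast_N(\bar\alpha\cdot T)$ and then finish both directions at once by invoking injectivity of the $\ast$-product (in the spirit of Lemma~\ref{lem:starproductlatticeinjection}, or directly from the recoverability of $\mathcal{K}$ and $\mathcal{L}$ from $\mathcal{M}$), whereas the paper treats the two implications separately: for the forward direction it checks directly that the superclasses of $\mathcal{M}$ are permuted, and for the converse it first argues that the coset-type superclasses and the $\mathcal{K}$-type superclasses are permuted amongst themselves (the paper uses a cardinality observation; your observation that parts of $\mathcal{K}$ are exactly the parts of $\mathcal{M}$ contained in $N=\alpha(N)$ is arguably cleaner) and then matches cosets to deduce $B$-characteristicity. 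Your route buys uniformity and makes the only real content (equivariance of the construction) explicit, at the small cost of having to note that injectivity holds on the relevant domain, that $\alpha_N\cdot S$ is still $G$-invariant, and that checking the superclass partitions suffices because they determine the supercharacter partitions; the paper's route is more elementary and self-contained but repeats the coset computation in both directions. Both are complete proofs.
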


\begin{proof}
	Suppose $S$ is $A$-characteristic and $T$ is $B$-characteristic.  Let $\alpha\in\mathrm{Aut}(G)$ and consider the action of $\alpha$ on $S\ast_NT$.  Since $N$ is characteristic, $\alpha$ permutes the superclasses of $S$ via the action of $\mathrm{Res}^G_N(\alpha)$.  Let $L$ be a superclass of $T$ and let $\widetilde{L} = \bigcup_{gN\in L}gN$ be its preimage under the canonical map $G\to G/N$.  Similarly, let $L'$ be the image of $L$ under $\mathrm{Proj}^G_{G/N}(\alpha)$ and let $\widetilde{L'}$ be its preimage.  We are done if we can show that $\alpha(M)=M'$.  But this is verified directly:
	\[
		\alpha(\widetilde{L}) = \alpha\bigg(\bigcup_{x\in L}xN\bigg) = \bigcup_{x\in L}\alpha(xN) = \bigcup_{x\in L}\alpha(x)N = \widetilde{L'}.
	\]
	Thus, $\mathrm{Aut}(G)$ permutes the superclasses of $S\ast_NT$, so it is characteristic.

	Now suppose $S\ast_NT$ is characteristic.  Then its superclasses are permuted.  Since the superclasses which are unions of $N$-cosets (i.e., those inherited from $\mathcal{L}$) all contain at least $\abs{N}$ elements, it follows that these are permuted amongst themselves, and the superclasses which are parts of $\mathcal{K}$ are permuted amongst themselves.  This immediately implies that $S$ is $A$-characteristic.  As before, let $\alpha\in\mathrm{Aut}(G)$ and consider the action of $\mathrm{Proj}^G_{G/N}(\alpha)$ on $\mathcal{L}$.  Let $L\in \mathcal{L}$ and let $\widetilde{L}$ be its preimage under the canonical map $G\to G/N$.  Then by the preceeding remarks, $\alpha(\widetilde{L}) = \widetilde{L'}$ for some $L'\in\mathcal{L}$.  If $L = \{x_1,\ldots,x_a\}$ and $L'=\{y_1,\ldots,y_b\}$, then we have $\bigcup_{i=1}^a\alpha(x_i)N = \bigcup_{j=1}^by_j N$.  That these are disjoint unions of distinct cosets implies that $\mathrm{Proj}^G_{G/N}(\alpha)(L) = L'$.  Thus, $T$ is $B$-characteristic.
\end{proof}


\section{The characteristic case}

Let
\[
	D_{2n} = \langle r,s\,:\,r^n=s^2=e,\,srs=r^{-1}\rangle = \langle r\rangle \rtimes\langle s\rangle.
\]
Our goal will be to construct the supercharacter theories of $D_{2n}$ from those of its cyclic normal subgroups $\langle r^d\rangle$ for divisors $d$ of $n$.  Since the subgroups $\langle r^d\rangle$ are cyclic, we may invoke Theorem \ref{thm:Zn_classification} and classify all of $\mathrm{SCT}(D_{2n})$.

The character tables of dihedral groups can be computed easily.  For odd $n$, they take the form

\begin{equation}\label{eq:odd_n_CT}
	\arraycolsep=2.4pt\def\arraystretch{1.5}
	\begin{array}{|c|c|c|c|c|c|c|}
		\hline
		\text{class rep.} & e & r^k;\;1\leq k\leq\frac{n-1}{2} & s \\
		\hline
		\text{class size} & 1 & 2 & n \\
		\hline
		\mathbf{1}_{D_{2n}} & 1 & 1 & 1 \\
		\hline
		\lambda & 1 & 1 & -1 \\
		\hline
		\chi_m,\;1\leq m\leq \frac{n-1}{2} & 2 & 2\cos\big(\frac{2\pi km}{n}\big) & 0 \\
		\hline
	\end{array}
\end{equation}

while for even $n$, they take the form

\begin{equation}\label{eq:even_n_CT}
	\arraycolsep=2.4pt\def\arraystretch{1.5}
	\begin{array}{|c|c|c|c|c|c|c|}
		\hline
		\text{class rep.} & e & r^k;\;1\leq k\leq\frac{n-2}{2} & r^{\frac{n}{2}} & s & rs \\
		\hline
		\text{class size} & 1 & 2 & 1 & \frac{n}{2} & \frac{n}{2} \\
		\hline
		\mathbf{1}_{D_{2n}} & 1 & 1 & 1 & 1 & 1 \\
		\hline
		\lambda & 1 & 1 & 1 & -1 & -1 \\
		\hline
		\mu_0 & 1 & (-1)^k & (-1)^{\frac{n}{2}} & 1 & -1 \\
		\hline
		\mu_1 & 1 & (-1)^k & (-1)^{\frac{n}{2}} & -1 & 1 \\
		\hline
		\chi_m,\;1\leq m\leq \frac{n-2}{2} & 2 & 2\cos\big(\frac{2\pi km}{n}\big) & 2\cos(\pi m) & 0 & 0 \\
		\hline
	\end{array}
\end{equation}

\begin{rmk}
	We will denote $\chi_{n/2} := \mu_0+\mu_1$ if $n$ is even, and say $\chi_{n/2}\in X$ if $\mu_0,\mu_1\in X$.  This allows us to define supercharacter theories of $D_{2n}$ in terms of the indices of the characters $\chi_m$ regardless of the parity of $n$.
\end{rmk}

\subsection{The classification}

Let $\mathcal{P}_1$ be the image of $D_{2n}\mathrm{InvSCT}(\langle r\rangle)$ under the $\ast$-product with the unique supercharacter theory $M(D_{2n}/\langle r\rangle)$ of $D_{2n}/\langle r\rangle$ and note that by Lemmas \ref{lem:starproductlatticeinjection} and \ref{lem:star_preserves_char}, $\mathcal{P}_1$ is a sublattice of $\mathrm{CharSCT}(D_{2n})$ isomorphic to $D_{2n}\mathrm{InvSCT}(\langle r\rangle)$.  Moreover, if $T = (\mathcal{K},\mathcal{X})$ is an element of $D_{2n}\mathrm{InvSCT}(\langle r\rangle)$, then we can describe its image $T\ast_{\langle r\rangle}M(D_{2n}/\langle r\rangle) = (\mathcal{L},\mathcal{Y})$ easily: the superclass partition is $\mathcal{L}=\mathcal{K}\cup\{s\langle r\rangle\}$ and the supercharacters are $\mathbf{1}_{D_{2n}}$ and $\lambda$ along with $\mathrm{Ind}_{\langle r\rangle}^{D_{2n}}(\sigma_X)$ for $X$ in $\mathcal{X}\setminus\{\mathbf{1}_{\langle r\rangle}\}$.  One can also check directly that the superclass partition of $mm_{\langle r\rangle}(D_{2n})$ is
\[
	\{s\langle r\rangle\}\cup\Big\{\{r^k,r^{n-k}\}\,:\,0\leq k\leq\left\lfloor\frac{n}{2}\right\rfloor\Big\}
\]
(so $mm_{\langle r\rangle}(D_{2n})=m(D_{2n})$ if $n$ is odd).  Finally, $MM_{\langle r\rangle}(D_{2n})$ has the superclass partition $\{\{e\}, s\langle r\rangle, \langle r\rangle\setminus\{e\}\}$.  Thus, it follows by Lemma \ref{lem:starproductlatticeinjection} that a supercharacter theory factors over $\langle r\rangle$ if and only if it contains $s\langle r\rangle$ as a superclass.

For each divisor $d$ of $n$, denote
\begin{equation}\label{eq:Fd_def}
	F_d = m_{D_{2n}}(\langle r^d\rangle)\ast_{\langle r^d\rangle}M(D_{2n}/\langle r^d\rangle)
\end{equation}
and
\begin{equation}\label{eq:Cd_def}
	C_d = M_{D_{2n}}(\langle r^d\rangle)\ast_{\langle r^d\rangle}M(D_{2n}/\langle r^d\rangle),
\end{equation}
and let $\mathcal{P}_d = [F_d,C_d]$ be the interval of supercharacter theories lying between $F_d$ and $C_d$.  If $d = 1$, then $F_d$ and $C_d$ reduce to the minimal and maximal $\ast$-products $mm_{\langle r\rangle}(D_{2n})$ and $MM_{\langle r\rangle}(D_{2n})$, respectively and so our definition of $\mathcal{P}_1$ is not redundant.  If $d = n$, then $F_d$ and $C_d$ coincide and are equal to $M(D_{2n})$.

If $n$ is even, let $\mathcal{Q}$ be the subposet of $\mathcal{P}_1$ consisting of those supercharacter theories for which $\chi_{n/2}$ is a supercharacter (otherwise, let $\mathcal{Q} = \emptyset$).  The classification of $\mathrm{CharSCT}(D_{2n})$ is given by Theorem \ref{thm:D2n_char_classification}.

\begin{thm}\label{thm:D2n_char_classification}
	If $n$ is odd, then we can express the characteristic supercharacter theories of $D_{2n}$ as a disjoint union of the form
	\[
		\mathrm{CharSCT}(D_{2n}) = \bigsqcup_{d\mathrel{\vert}n}\mathcal{P}_d.
	\]
	If $n$ is even, then for any $T\in\mathcal{Q}$, we may produce a new supercharacter theory $\psi(T)\in\mathrm{CharSCT}(D_{2n})$ by splitting the two conjugacy classes of reflections into distinct superclasses.  In this case, we may express $\mathrm{CharSCT}(D_{2n})$ as a disjoint union of the form
	\[
		\mathrm{CharSCT}(D_{2n}) = \bigg(\bigsqcup_{d\mathrel{\vert}n}\mathcal{P}_n\bigg) \sqcup \psi(\mathcal{Q}).
	\]
\end{thm}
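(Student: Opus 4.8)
The plan is to prove both inclusions of the stated decomposition: that each $\mathcal{P}_d$, and (for even $n$) each $\psi(T)$, is characteristic, that the resulting families are pairwise disjoint, and conversely that they exhaust $\mathrm{CharSCT}(D_{2n})$.

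\emph{The forward inclusion.} The subgroup $\langle r^d\rangle$ is characteristic in $D_{2n}$ (for $n\geq3$ it is the unique subgroup of $\langle r\rangle$ of its order, and $\langle r\rangle$ is the unique cyclic subgroup of order $n$), so since $D_{2n}\mathrm{InvSCT}(\langle r^d\rangle)\subseteq\mathrm{CharSCT}(\langle r^d\rangle)$, Lemmas~\ref{lem:starproductlatticeinjection} and~\ref{lem:star_preserves_char} give $\mathcal{P}_d=[F_d,C_d]\subseteq\mathrm{CharSCT}(D_{2n})$, exactly as was noted for $\mathcal{P}_1$. For $\psi(T)$ with $T\in\mathcal{Q}$ I would verify directly that it is a supercharacter theory: its superclass partition breaks only $s\langle r\rangle$, into the two conjugacy classes of reflections, and its supercharacter partition breaks only the block $\{\mu_0,\mu_1\}$ of $T$, into $\{\mu_0\}$ and $\{\mu_1\}$, so the two partitions remain of equal size; condition~(3) holds because $\mu_0$ and $\mu_1$ restrict on $\langle r\rangle$ to the order-two linear character of $\langle r\rangle$, which is constant on the rotation superclasses precisely because $T\in\mathcal{Q}$, while on each of the two reflection classes $\mu_0$ and $\mu_1$ are constant with values $\pm1$. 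And $\psi(T)$ is characteristic since $\mathrm{Aut}(D_{2n})$ preserves the partition of $T$ and either fixes or interchanges the two reflection classes. Disjointness is then bookkeeping: a member of $\mathcal{P}_d$ has a single superclass meeting the reflections, namely $D_{2n}\setminus\langle r^d\rangle$, so the union of its other superclasses is $\langle r^d\rangle$, recovering $d$; a member of $\psi(\mathcal{Q})$ has two superclasses of reflections, so it lies in no $\mathcal{P}_d$.

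\emph{The reverse inclusion.} Let $S=(\mathcal{K},\mathcal{X})\in\mathrm{CharSCT}(D_{2n})$ and let $K_0$ be the superclass containing $s$. The set $R$ of reflections is a single $\mathrm{Aut}(D_{2n})$-orbit, so the superclasses meeting $R$ are $\mathrm{Aut}$-conjugate, hence of equal size and of common number $j$; from $j\cdot\abs{K_0\cap R}=\abs{R}=n$ and the fact that $K_0\cap R$ is a union of reflection conjugacy classes, $j=1$ when $n$ is odd and $j\in\{1,2\}$ when $n$ is even. If $j=2$, the automorphism fixing every rotation and exchanging the two reflection classes forces both reflection superclasses to be rotation-free, so they are exactly the two reflection conjugacy classes and $\langle r\rangle$ is $S$-normal; then $S_{\langle r\rangle}\in D_{2n}\mathrm{InvSCT}(\langle r\rangle)$ and $S'=S_{\langle r\rangle}\ast_{\langle r\rangle}M(D_{2n}/\langle r\rangle)\in\mathcal{P}_1$ has the same superclass partition as $S$ save that the two reflection classes are merged into $s\langle r\rangle$; a short analysis of the supercharacter side --- in any valid theory with $s\langle r\rangle$ a superclass, $\mu_0$ and $\mu_1$ share a block, and the two reflection classes can be separated on the character side only by splitting that block, which is consistent with condition~(3) exactly when the block is $\{\mu_0,\mu_1\}$ --- shows $S'\in\mathcal{Q}$ and that $S=\psi(S')$. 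If $j=1$ and $K_0=R$, then $s\langle r\rangle$ is a superclass of $S$, so $S$ factors over $\langle r\rangle$ and lies in $\mathcal{P}_1$. The remaining case, $j=1$ with $R\subsetneq K_0$, is the heart of the argument; write $K_0=R\sqcup A$ with $A$ a nonempty union of rotation conjugacy classes and set $H=D_{2n}\setminus K_0\subseteq\langle r\rangle$.

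\emph{The main obstacle.} I expect the key difficulty to be showing that $H$ is a subgroup of $\langle r\rangle$. Granting this, $H=\langle r^d\rangle$ for some $d\mid n$ with $d>1$, $K_0=D_{2n}\setminus\langle r^d\rangle$ is the unique nonidentity superclass of $M(D_{2n}/\langle r^d\rangle)$, and $S$ factors over $\langle r^d\rangle$ with quotient part $M(D_{2n}/\langle r^d\rangle)$, so $S\in\mathcal{P}_d$ by Lemma~\ref{lem:starproductlatticeinjection}. To prove $H$ is a subgroup I would compute in $Z(\mathbb{C}D_{2n})$. Writing $\underline{R}$ for the sum of all reflections, one has $\underline{R}^2=n\,\underline{\langle r\rangle}$ and $r^i\underline{R}=\underline{R}=\underline{R}\,r^i$, hence $\underline{R}\,\underline{A}=\underline{A}\,\underline{R}=\abs{A}\,\underline{R}$; since $\underline{K_0}=\underline{R}+\underline{A}$ this yields
\[
	\underline{K_0}^2\;=\;n\,\underline{\langle r\rangle}\;+\;2\abs{A}\,\underline{R}\;+\;\underline{A}^2 .
\]
By Lemma~\ref{lem:Hen_sums_span} we may write $\underline{K_0}^2=\sum_{K\in\mathcal{K}}\gamma_K\underline{K}$; since $K_0$ is the only superclass containing reflections, comparing the coefficient of a single reflection on both sides gives $\gamma_{K_0}=2\abs{A}$, so
\(
	\underline{K_0}^2-2\abs{A}\,\underline{K_0}=n\,\underline{\langle r\rangle}+\underline{A}^2-2\abs{A}\,\underline{A}
\)
is a combination of the superclass sums $\underline{K}$ with $K\subseteq H$, hence is supported on $H$. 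Its coefficient at $r^m$ therefore vanishes for every $r^m\in A$, which gives $\#\{(x,x')\in A\times A:xx'=r^m\}=2\abs{A}-n$ for all such $r^m$; equivalently, each $r^m\in A$ has exactly $\abs{H}$ factorizations $r^m=xh$ with $x\in A$ and $h\in H$. Summing over the $\abs{A}$ elements of $A$ accounts for all $\abs{A}\,\abs{H}$ pairs of $A\times H$, so $xh\in A$ for every $x\in A$, $h\in H$; thus $Ah=A$ for all $h\in H$, i.e.\ $H$ lies in the setwise multiplicative stabilizer of $A$ in $\langle r\rangle$. That stabilizer is disjoint from $A$ (if $x$ lay in both, then $e=x^{-1}x\in x^{-1}A=A$, impossible as $e\notin A$), so it is contained in $H$; hence $H$ equals this stabilizer and is a subgroup. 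The remaining verifications --- equinumerosity of the reflection superclasses, the supercharacter bookkeeping for $\psi$, and the case $j=1$ with $K_0=R$ --- are routine.
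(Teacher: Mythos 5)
Your reverse inclusion is sound and, at its core, parallels the paper's argument: you isolate the superclass $K_0$ containing the reflections, show (when there is a single such superclass properly containing the set of reflections) that $D_{2n}\setminus K_0$ is a subgroup $\langle r^d\rangle$, and conclude that $S$ factors over $\langle r^d\rangle$ with quotient part $M(D_{2n}/\langle r^d\rangle)$, hence lies in $\mathcal{P}_d$; the split-reflection case is handled by joining with $mm_{\langle r\rangle}(D_{2n})$, exactly as in Lemma \ref{lem:glues_reflections}. Your proof that $H=D_{2n}\setminus K_0$ is a subgroup --- expanding $\underline{K_0}^2$, reading off $\gamma_{K_0}=2\abs{A}$ from the reflection coefficients, and counting factorizations --- is correct, but note that the paper's Lemma \ref{lem:complement_forms_subgroup} gets the same conclusion in three lines: for $g_1,g_2\in H$ lying in superclasses $L_1,L_2\neq K_0$, the product $\underline{L_1}\,\underline{L_2}$ has zero coefficient at $s$, hence no $\underline{K_0}$-component, hence $g_1g_2\notin K_0$. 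Also, your ``short analysis of the supercharacter side'' in the two-reflection-superclass case is where real work is being elided; the clean argument is that the automorphism $\tau:s\mapsto sr$ transposes $\mu_0$ and $\mu_1$ while fixing every other irreducible character, and since it must permute the blocks of a characteristic theory, the blocks containing $\mu_0$ and $\mu_1$ are forced to be singletons.

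The genuine gap is in your forward inclusion. To deduce $\mathcal{P}_d\subseteq\mathrm{CharSCT}(D_{2n})$ from Lemma \ref{lem:star_preserves_char}, you need every $T\in D_{2n}\mathrm{InvSCT}(\langle r^d\rangle)$ to be $A$-characteristic, where $A$ is the image of $\mathrm{Aut}(D_{2n})$ in $\mathrm{Aut}(\langle r^d\rangle)$ --- and that image is \emph{all} of $\mathrm{Aut}(\langle r^d\rangle)$. Your parenthetical claim $D_{2n}\mathrm{InvSCT}(\langle r^d\rangle)\subseteq\mathrm{CharSCT}(\langle r^d\rangle)$ is exactly this assertion, and it is not obvious: $D_{2n}$-invariance only says the superclasses are closed under inversion, whereas being characteristic demands stability under every power map $r^d\mapsto r^{jd}$. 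The claim is in fact true, but it is a Galois-theoretic statement about Schur rings over cyclic groups (the superclass algebra is defined over $\mathbb{Q}$, so $\mathrm{Gal}(\mathbb{Q}(\zeta_n)/\mathbb{Q})$ permutes its primitive idempotents), and it requires an argument you have not supplied. The paper sidesteps this entirely: the converse direction of Lemma \ref{lem:form_of_char} shows directly that \emph{any} supercharacter theory of $D_{2n}$ which glues reflections is characteristic, by a cyclotomic-polynomial argument proving that the set $A$ of rotations glued to $s\langle r\rangle$ is stable under $r^k\mapsto r^{kj}$; since every element of $\mathcal{P}_d$ glues reflections and every element of $\psi(\mathcal{Q})$ has $s\langle r\rangle$ as a union of superclasses, both containments of the theorem then follow from that single lemma. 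To repair your version, either prove the stability claim for inversion-closed supercharacter theories of cyclic groups, or replace the forward inclusion with an argument in the spirit of Lemma \ref{lem:form_of_char}.
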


\subsection{Proofs}

If $S\in\mathrm{SCT}(D_{2n})$ and $s\langle r\rangle$ is a (possibly proper) subset of an $S$-superclass, then we will say that $S$ \emph{glues reflections}.

\begin{lem}\label{lem:form_of_char}
	A supercharacter theory $S$ of $D_{2n}$ (for any $n$) is characteristic if and only if either $s\langle r\rangle$ is a union of $S$-superclasses or $S$ glues reflections.
\end{lem}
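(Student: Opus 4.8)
The plan is to prove the two implications separately. Throughout I would assume $n\ge 3$ (so $D_{2n}$ is nonabelian and $\langle r\rangle$, indeed each $\langle r^d\rangle$, is a characteristic subgroup — being the unique cyclic subgroup of its order; the cases $n\le 2$ are degenerate). I will repeatedly use that the restriction map $\mathrm{Aut}(D_{2n})\to\mathrm{Aut}(\langle r^d\rangle)\cong\mathrm{Aut}(\mathbb{Z}_{n/d})$ is onto (the automorphisms $r\mapsto r^a,\ s\mapsto s$ already surject), and — this is the one fact not in the excerpt — that \emph{every supercharacter theory of a cyclic group is characteristic}. The latter I would deduce from Theorem~\ref{thm:Zn_classification} by induction on the order: $m_A(\mathbb{Z}_m)$ is characteristic because $\mathrm{Aut}(\mathbb{Z}_m)$ is abelian and permutes its own orbits; a direct product of characteristic supercharacter theories is characteristic (componentwise action); and a $\Delta$-product of characteristic supercharacter theories is characteristic by Lemmas~\ref{lem:starproductlatticeinjection}–\ref{lem:star_preserves_char} together with the surjectivity of $\mathrm{Aut}$ onto the automorphism groups of subgroups and quotients of a cyclic group. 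I expect this to be the only place the classification of $\mathrm{SCT}(\mathbb{Z}_n)$ is used.

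\textbf{Forward direction.} Suppose $S=(\mathcal{K},\mathcal{X})$ is characteristic and $s\langle r\rangle$ is not a union of $S$-superclasses; since $\langle r\rangle=D_{2n}\setminus s\langle r\rangle$ is then not a union of superclasses either, some superclass $K_0$ meets both $\langle r\rangle\setminus\{e\}$ and $s\langle r\rangle$. I claim $K_0\supseteq s\langle r\rangle$, which is exactly ``$S$ glues reflections.'' If $n$ is odd this is immediate: $s\langle r\rangle$ is a single conjugacy class, and $K_0$, a union of conjugacy classes meeting it, must contain it. If $n$ is even, put $A_0=K_0\cap\langle r\rangle\ne\emptyset$ and apply the automorphism $\beta$ with $\beta(r)=r$, $\beta(s)=rs$; it fixes $\langle r\rangle$ pointwise and interchanges the two reflection classes $C_0=s\langle r^2\rangle$ and $C_1=rs\langle r^2\rangle$. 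As $S$ is characteristic, $\beta(K_0)$ is again a superclass, and it contains $A_0$, so $\beta(K_0)=K_0$ (distinct superclasses are disjoint). Hence $K_0\cap s\langle r\rangle$ is a nonempty $\beta$-invariant union of reflection classes, forcing $K_0\cap s\langle r\rangle=C_0\cup C_1=s\langle r\rangle$. This is the ``slick'' half of the lemma and should be short.

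\textbf{Converse.} Write $\mathfrak{A}=\mathrm{span}\{\underline{K}:K\in\mathcal{K}\}\subseteq Z(\mathbb{C}D_{2n})$, a subalgebra by Lemma~\ref{lem:Hen_sums_span}. \emph{Case A: $s\langle r\rangle$ is a union of $S$-superclasses.} Then the superclass sums lying in $\mathbb{C}\langle r\rangle$ are precisely the $\underline{K}$ with $K\subseteq\langle r\rangle$, so $\mathfrak{A}\cap\mathbb{C}\langle r\rangle=\mathrm{span}\{\underline{K}:K\in\mathcal{K},\ K\subseteq\langle r\rangle\}$ is a subalgebra of $Z(\mathbb{C}\langle r\rangle)$; by Lemma~\ref{lem:Hen_sums_span} the $S$-superclasses inside $\langle r\rangle$ form the superclass partition of a supercharacter theory of $\langle r\rangle\cong\mathbb{Z}_n$, which is characteristic by the fact above, hence preserved by $\mathrm{Aut}(D_{2n})$ acting through $\mathrm{Aut}(\mathbb{Z}_n)$. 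The remaining superclasses lie in $s\langle r\rangle$ and are unions of the at most two conjugacy classes of reflections, which $\mathrm{Aut}(D_{2n})$ permutes, so that part of $\mathcal{K}$ is preserved too; thus $S$ is characteristic. \emph{Case B: $S$ glues reflections, say $K_0\supseteq s\langle r\rangle$.} Then $K_0$ is the unique superclass meeting $s\langle r\rangle$. Put $A=K_0\cap(\langle r\rangle\setminus\{e\})$ and $T=\underline{s\langle r\rangle}=\underline{K_0}-\underline{A}$. A direct computation in $\mathbb{C}D_{2n}$ gives $T^2=n\,\underline{\langle r\rangle}$ and $\underline{A}\,T=T\,\underline{A}=\abs{A}\,T$, hence $\underline{K_0}^{\,2}=\underline{A}^2+2\abs{A}\,T+n\,\underline{\langle r\rangle}$, so $\underline{A}^2-2\abs{A}\,\underline{A}+n\,\underline{\langle r\rangle}=\underline{K_0}^{\,2}-2\abs{A}\,\underline{K_0}\in\mathfrak{A}$. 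This element is supported on $\langle r\rangle$, hence lies in $\mathfrak{A}\cap\mathbb{C}\langle r\rangle=\mathrm{span}\{\underline{K}:K\in\mathcal{K},\ K\subseteq\langle r\rangle\}$, which is supported on $\langle r\rangle\setminus A$; evaluating the coefficient at each $r^k\in A$ yields $[\underline{A}^2]_{r^k}=2\abs{A}-n$, and an inclusion–exclusion count then shows the complement $B$ of $A$ in $\langle r\rangle$ satisfies $B+B\subseteq B$. Since $B$ is finite and contains $e$, it is a subgroup, so $A=\langle r\rangle\setminus\langle r^d\rangle$ for some $d\mid n$. Consequently $K_0=D_{2n}\setminus\langle r^d\rangle$ is $\mathrm{Aut}(D_{2n})$-invariant (as $\langle r^d\rangle$ is characteristic), $\langle r^d\rangle$ is $S$-normal, and — exactly as in Case A, applied to $\langle r^d\rangle$ in place of $\langle r\rangle$ — the remaining superclasses form a characteristic supercharacter theory of $\langle r^d\rangle\cong\mathbb{Z}_{n/d}$, whose partition $\mathrm{Aut}(D_{2n})$ preserves. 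Hence $S$ is characteristic. (Alternatively, Case B can be phrased as $S=S_0\ast_{\langle r^d\rangle}M(D_{2n}/\langle r^d\rangle)$ followed by Lemma~\ref{lem:star_preserves_char}.)

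\textbf{Expected obstacles.} The two substantive points are: (i) the input fact that every supercharacter theory of a cyclic group is characteristic, which must be cited or proved by the short induction above — this is the only use of Theorem~\ref{thm:Zn_classification}; and (ii) the computation with $\underline{K_0}^{\,2}$ and the combinatorial deduction that $B$ is a subgroup. Point (ii) is elementary, but it is precisely where the hypothesis that $S$ is a genuine supercharacter theory (not merely a characteristic partition into unions of conjugacy classes) is used to pin down the ``glued'' rotations; indeed, one can exhibit for even $n$ a valid but non-characteristic supercharacter theory in which a single reflection class is merged with rotations, which shows the lemma is not formal.
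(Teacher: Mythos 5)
Your proof is correct, and while the forward direction coincides with the paper's (both use the automorphism $r\mapsto r$, $s\mapsto sr$ to force a superclass meeting both $\langle r\rangle\setminus\{e\}$ and $s\langle r\rangle$ to swallow all of $s\langle r\rangle$), your converse takes a genuinely different route. For the glued case the paper shows directly that the rotation part $A$ of the glued superclass is $\mathrm{Aut}(D_{2n})$-stable by a Galois argument: each supercharacter with no linear constituent vanishes on $A$ (being constant on $s\langle r\rangle\cup A$ and zero on reflections), so applying $\zeta_n\mapsto\zeta_n^j$ to the resulting integral relations shows all supercharacters agree on $r^k$ and $r^{kj}$; it then passes to $S\wedge MM_{\langle r\rangle}(D_{2n})$ and invokes Lemma \ref{lem:star_preserves_char}. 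You instead pin down $A$ exactly as $\langle r\rangle\setminus\langle r^d\rangle$ via the $\underline{K_0}^{\,2}$ computation — this is in substance the paper's Lemma \ref{lem:complement_forms_subgroup}, which the paper proves later by the slicker observation that the coefficient of $s$ in $\underline{L_1}\,\underline{L_2}$ vanishes for rotation superclasses $L_1,L_2$ — and then reduce to the cyclic group $\langle r^d\rangle$ via the subalgebra $\mathfrak{A}\cap\mathbb{C}\langle r^d\rangle$ and Lemma \ref{lem:Hen_sums_span}. Your approach buys an explicit identification of the glued superclass and a uniform treatment of both cases; the paper's buys independence from your one external input within this lemma, at the cost of deferring the subgroup structure of the complement to a later lemma.

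One caveat, which is not a gap so much as a point you should make airtight: your converse leans on the assertion that every (inversion-closed) supercharacter theory of a cyclic group is characteristic. You are in good company — the paper's own appeals to Lemma \ref{lem:star_preserves_char} ("factors over $\langle r\rangle$, hence characteristic") silently require the same fact, since that lemma demands the $\langle r\rangle$-component be $\mathrm{Aut}(\mathbb{Z}_n)$-characteristic. But your inductive sketch via Theorem \ref{thm:Zn_classification} cites Lemmas \ref{lem:starproductlatticeinjection}--\ref{lem:star_preserves_char} to handle the $\Delta$-product case, and those lemmas are stated only for $\ast$-products; you would need to either prove the $\Delta$-product analogue or, more cleanly, cite the classical Schur--Wielandt multiplier theorem (each basic set of a Schur ring over an abelian group is stable under $g\mapsto g^j$ for $j$ coprime to the order), which gives the fact in one line without the classification.
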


\begin{proof}
	Let $S$ be a characteristic supercharacter theory.  For $i=0,1$, let $K_i$ be the superclass containing $sr^i\langle r^2\rangle$.  If $\tau$ is the automorphism defined by $r\mapsto r$ and $s\mapsto sr$, then $\tau$ transposes $s\langle r^2\rangle$ and $sr\langle r^2\rangle$, hence $\tau$ transposes $K_0$ and $K_1$.  Because $\tau$ fixes all rotations, it follows that either $K_0=K_1$, or $K_i=sr^i\langle r^2\rangle$ for $i=0,1$.  In the former case, $s\langle r\rangle$ is a subset of the superclass $K_0$, while in the latter case, $s\langle r\rangle$ is the union of the superclasses $K_0$ and $K_1$.

	Conversely, write $S=(\mathcal{K},\mathcal{X})$ and suppose $s\langle r^2\rangle$ and $sr\langle r^2\rangle$ are parts of $\mathcal{K}$.  Then $n$ is necessarily even, since these are separate conjugacy classes.  Now, $\mu_0$ and $\mu_1$ lie in different parts of $\mathcal{X}$, say $X_0$ and $X_1$, respectively.  Let $\tau$ be as before, and note that because $\tau$ permutes $s\langle r^2\rangle$ and $sr\langle r^2\rangle$ and fixes all rotations, it follows that $\tau\cdot\mathcal{K}=\mathcal{K}$ and therefore $\tau\cdot\mathcal{X}=\mathcal{X}$.   Thus, since $\tau$ transposes $\mu_0$ and $\mu_1$ and fixes all other characters, it follows that $X_i = \{\mu_i\}$ for $i=0,1$.  Taking the join $S\vee mm_{\langle r\rangle}(D_{2n})$ glues $X_0$ to $X_1$ and $s\langle r^2\rangle$ to $sr\langle r^2\rangle$ and preserves all other superclasses and supercharacters.  Thus, $S\vee mm_{\langle r\rangle}(D_{2n})$ factors over $\langle r\rangle$.  By Lemma \ref{lem:star_preserves_char}, it follows that $S\vee mm_{\langle r\rangle}(D_{2n})$ is characteristic.  Since $\mathrm{Aut}(D_{2n})$ fixes $X_0\cup X_1$ and $s\langle r\rangle$, it follows that $S$ is characteristic.

	Next, write $S=(\mathcal{K},\mathcal{X})$ and suppose $s\langle r\rangle$ is a subset of a superclass.  Then if $n$ is even, it follows that $\mu_0$ and $\mu_1$ lie in the same part of $\mathcal{X}$.  Write $s\langle r\rangle\cup A$ for the part of $\mathcal{K}$ containing $s\langle r\rangle$ and write $\{\lambda\}\cup B$ for the part of $\mathcal{X}$ containing $\lambda$.   We claim that $A$ is fixed under the action of $\mathrm{Aut}(D_{2n})$.  Let $r^k\in A$ and let $j$ be coprime to $n$.  Then for any part $X$ of $\mathcal{X}$ which does not contain $\lambda$ or $\mathbf{1}_{D_{2n}}$ as constituents, say $X = \{\chi_\ell:\ell\in I\}$ (where $I$ may contain $n/2$), we have
	\[
		\sigma_X(r^k) = 2\zeta_n^{k\frac{n}{2}} + \sum_{\ell\in I\setminus\{\frac{n}{2}\}}2(\zeta_n^{k\ell} + \overline{\zeta_n}^{k\ell}) = 0
	\]
	if $n/2\in I$, and otherwise
	\[
		\sigma_X(r^k) = \sum_{\ell\in I}2(\zeta_n^{k\ell} + \overline{\zeta_n}^{k\ell}) = 0.
	\]
	In both cases, the resulting equation defines a polynomial $f(x)\in\mathbb{Z}[x]$ such that $f(\zeta_n) = \sigma_X(r^k) = 0$.  Any such polynomial $f(x)$ is divisible by the $n$th cyclotomic polynomial, and therefore has $\zeta_n^j$ as a root, for any $j$ coprime to $n$.  Hence we may replace $\zeta_n$ with $\zeta_n^j$ in these equations, which yields $\sigma_X(r^{kj})=0$.  Thus, every supercharacter of $S$ which does not contain $\lambda$ or $\mathbf{1}_{D_{2n}}$ as constituents agrees on $r^k$ and $r^{kj}$.  Since $\rho_{D_{2n}}$ and $\mathbf{1}_{D_{2n}}$ agree on these elements and
	\[
		\sigma_{\{\lambda\}\cup B} = \rho_{D_{2n}} - \mathbf{1}_{D_{2n}} - \sum_{\substack{X\in\mathcal{X} \\ \chi(1)>1 \forall \chi\in X}}\sigma_X,
	\]
	it follows that every supercharacter of $S$ agrees on $r^k$ and $r^{kj}$, and so these elements lie in the same part of $\mathcal{K}$.  This implies that $A$ is fixed under the action of $\mathrm{Aut}(D_{2n})$.  Next, write $MM_{\langle r\rangle}(D_{2n}) = (\mathcal{L},\mathcal{Y})$, where
	\[
		\mathcal{L} = \big\{\{e\}, s\langle r\rangle, \langle r\rangle\setminus\{e\}\big\}
	\]
	and
	\[
		\mathcal{Y} = \big\{\{\mathbf{1}_{D_{2n}}\}, \{\lambda\}, \mathrm{Irr}(D_{2n})\setminus\{\mathbf{1}_{D_{2n}}, \lambda\}\big\}.
	\]
	Then
	\[
		\mathcal{K}\wedge\mathcal{L} = \big\{s\langle r\rangle, A\big\} \cup \big(\mathcal{K}\setminus\big\{s\langle r\rangle\cup A\big\}\big)
	\]
	and
	\[
		\mathcal{X}\wedge\mathcal{Y} = \big\{\{\lambda\}, B\big\} \cup \big(\mathcal{X}\setminus\big\{\{\lambda\}\cup B\big\}\big),
	\]
	and it is not hard to show that these partitions form a supercharacter theory, namely $S\wedge MM_{\langle r\rangle}(D_{2n})$.\footnote{One shows that $(\mathcal{K}\wedge\mathcal{L}, \mathcal{X}\wedge\mathcal{Y})$ is a supercharacter theory by showing that the characters $\lambda$ and $\sigma_B$ are constant on the parts of $\mathcal{K}\wedge\mathcal{L}$.}  This supercharacter theory factors over $\langle r\rangle$, hence it is characteristic by Lemma \ref{lem:star_preserves_char}.  Since we have already shown that $A$ is fixed under the action of $\mathrm{Aut}(D_{2n})$, and $s\langle r\rangle$ is also fixed, it follows that $\mathrm{Aut}(D_{2n})$ permutes the remaining superclases amongst themselves.  But these are precisely the superclasses of $\mathcal{K}$ not equal to $s\langle r\rangle\cup A$.  Therefore, we have shown that $S$ is characteristic.
\end{proof}

\begin{lem}\label{lem:glues_reflections}
	Every characteristic supercharacter theory of $D_{2n}$ either glues reflections, or is covered by a unique factorizable supercharacter theory which glues reflections.  
\end{lem}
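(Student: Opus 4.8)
The plan is to dispatch the claim by splitting on whether $S$ glues reflections.  If it does, the first alternative holds and there is nothing to prove, so assume $S$ is characteristic but does not glue reflections.  By Lemma~\ref{lem:form_of_char}, $s\langle r\rangle$ is then a union of $S$-superclasses, and no single $S$-superclass equals all of $s\langle r\rangle$ (that would make $S$ glue reflections).  When $n$ is odd, $s\langle r\rangle$ is a single conjugacy class, hence a single $S$-superclass, which is impossible; so $n$ is even, and since the only conjugacy classes contained in $s\langle r\rangle$ are then $s\langle r^2\rangle$ and $sr\langle r^2\rangle$, these are two distinct $S$-superclasses, and $\mathcal{K}_S$ puts no reflection in the same superclass as a rotation.

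I would then take the candidate $S' := S\vee mm_{\langle r\rangle}(D_{2n})$, which is a supercharacter theory by Proposition~\ref{prop:Hen_join}, and read off its superclass partition as $\mathcal{K}_S\vee\mathcal{K}'$, where $\mathcal{K}'$ is the superclass partition of $mm_{\langle r\rangle}(D_{2n})$.  The rotation parts of $\mathcal{K}'$ are exactly the rotation conjugacy classes $\{e\}$, $\{r^k,r^{n-k}\}$, $\{r^{n/2}\}$, so they refine the rotation parts of $\mathcal{K}_S$; and since both $\mathcal{K}'$ and (by the previous paragraph) $\mathcal{K}_S$ keep rotations and reflections in separate superclasses, the join merges precisely $s\langle r^2\rangle$ with $sr\langle r^2\rangle$ into the part $s\langle r\rangle$ and leaves every other $S$-superclass untouched.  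Hence $\abs{S'}=\abs{S}-1$, so $S<S'$ is a covering relation; moreover $s\langle r\rangle$ is an $S'$-superclass, so $S'$ factors over $\langle r\rangle$ and in particular glues reflections.  Note $S'\neq S$ precisely because $S$ does not glue reflections, so this really is a cover.

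For uniqueness, I would use Lemma~\ref{lem:starproductlatticeinjection}: the supercharacter theories that factor over $\langle r\rangle$ form the interval $[mm_{\langle r\rangle}(D_{2n}),MM_{\langle r\rangle}(D_{2n})]$, so $mm_{\langle r\rangle}(D_{2n})$ is the least such.  Hence any factorizable $T$ with $T\geq S$ satisfies $T\geq S\vee mm_{\langle r\rangle}(D_{2n})=S'$; if in addition $T$ covers $S$, then $S<S'\leq T$ forces $S'=T$.  The one step that needs genuine care is the superclass count for $S'$ — verifying that joining with the minimal rotation partition does nothing beyond gluing the two reflection classes — and this reduces to the observation that every rotation superclass in a supercharacter theory of $D_{2n}$ is a union of the rotation conjugacy classes, which are exactly the rotation parts of $mm_{\langle r\rangle}(D_{2n})$.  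The rest is bookkeeping with the definitions and the cited lemmas.
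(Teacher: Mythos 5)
Your proof is correct and follows essentially the same route as the paper: both construct the cover as $S\vee mm_{\langle r\rangle}(D_{2n})$, with the paper verifying the count $\abs{S\vee mm_{\langle r\rangle}(D_{2n})}=\abs{S}-1$ on the supercharacter side (by using the automorphism $\tau$ to pin down $\{\mu_0\}$ and $\{\mu_1\}$ as singleton parts) where you verify it directly on the superclass side. Your uniqueness argument via the interval $[mm_{\langle r\rangle}(D_{2n}),MM_{\langle r\rangle}(D_{2n})]$ of Lemma~\ref{lem:starproductlatticeinjection} is a clean, slightly more lattice-theoretic packaging of the same fact the paper invokes.
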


\begin{proof}
	Let $S$ be characteristic.  If $S$ does not glue reflections (which can only happen if $n$ is even), then it follows by Lemma \ref{lem:form_of_char} that $s\langle r^2\rangle$ and $sr\langle r^2\rangle$ are superclasses.  Since $s\langle r^2\rangle$ and $sr\langle r^2\rangle$ lie in different superclasses, it follows that $\mu_0$ and $\mu_1$ lie in different parts of the supercharacter partition of $S$, say $X_0$ and $X_1$ respectively.  Let $\tau$ be the automorphism of $D_{2n}$ defined by $r\mapsto r$ and $s\mapsto sr$.  Then $\tau$ transposes $\mu_0$ and $\mu_1$, but it fixes all other irreducible characters.  Hence, $X_0 = \{\mu_0\}$ and $X_1=\{\mu_1\}$.  One can check that $S\vee mm_{\langle r\rangle}(D_{2n})$ factors over $\langle r\rangle$ and that $\abs{S\vee mm_{\langle r\rangle}(D_{2n})} = \abs{S} - 1$, which implies that $S\vee mm_{\langle r\rangle}(D_{2n})$ covers $S$.  This supercharacter theory also contains $\{\mu_0,\mu_1\}$ in its supercharacter partition.  That this is the unique factorizable supercharacter theory covering $S$ follows from the observation that any factorizable supercharacter theory must glue reflections, and that any supercharacter theory which glues reflections must glue $\mu_0$ and $\mu_1$.
\end{proof}

\begin{rmk}
	One can check that any factorizable supercharacter theory containing $\{\mu_0,\mu_1\}$ may be refined in a manner reverse to that of Lemma \ref{lem:glues_reflections}.  Thus, these factorizable covers are precisely the set of supercharacter theories whose supercharacter partitions each contain $\{\mu_0,\mu_1\}$ as a part, and this refinement is precisely the map $\psi$ described in Theorem \ref{thm:D2n_char_classification}.  
\end{rmk}

\begin{lem}\label{lem:complement_forms_subgroup}
	Let $S=(\mathcal{K},\mathcal{X})$ be a supercharacter theory of $D_{2n}$ which glues reflections and let $K\in\mathcal{K}$ be the part containing $s$.  Then $D_{2n}\setminus K$ is a subgroup of $\langle r\rangle$.
\end{lem}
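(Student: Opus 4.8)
The plan is to work entirely inside the algebra of superclass sums and to pin down the ``rotation part'' of $S$. Write $R = \langle r\rangle$ and let $R^- = D_{2n}\setminus R = s\langle r\rangle$ be the set of reflections, so $D_{2n} = R \sqcup R^-$. Since $S$ glues reflections and $s\in K$, the set $s\langle r\rangle$ is contained in the $S$-superclass containing $s$, which is $K$; hence $R^-\subseteq K$. Consequently $K$ is the \emph{only} $S$-superclass meeting $R^-$, every other $S$-superclass consists of rotations only, and $D_{2n}\setminus K = R\setminus(K\cap R) =: B$ is a subset of $R$. Note also $e\in B$, since $\{e\}$ is an $S$-superclass distinct from $K$.

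First I would observe that $\mathcal{B} := \mathrm{scf}_S(D_{2n})\cap\mathbb{C}R$ (identifying $\mathrm{scf}_S(D_{2n})$ with $\mathrm{span}\{\underline{K'}:K'\in\mathcal{K}\}\subseteq\mathbb{C}D_{2n}$ as in Section~\ref{subsec:Lattice}) is a subalgebra of $\mathbb{C}D_{2n}$: it is the intersection of two subalgebras, namely $\mathrm{span}\{\underline{K'}\}$, which is a subalgebra by Lemma~\ref{lem:Hen_sums_span}, and the group algebra $\mathbb{C}R$ of the subgroup $R$. Next I would identify $\mathcal{B}$ explicitly as $\mathrm{span}\{\underline{K'}:K'\in\mathcal{K},\ K'\subseteq R\}$. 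The inclusion $\supseteq$ is immediate. For $\subseteq$, expand an arbitrary element of $\mathrm{scf}_S(D_{2n})$ in the superclass-sum basis as $c_K\underline{K} + \sum_{K'\subseteq R}c_{K'}\underline{K'}$; since $\underline{K} = \underline{R^-} + \underline{K\cap R}$ and the other basis vectors contribute nothing on $R^-$, the coefficient of every reflection in this element equals $c_K$, so membership in $\mathbb{C}R$ forces $c_K = 0$.

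Now the $S$-superclasses contained in $R$ partition $B$, so $\underline{B} = \sum_{K'\subseteq R}\underline{K'}$ lies in $\mathcal{B}$. Since $\mathcal{B}$ is a subalgebra, $\underline{B}^2\in\mathcal{B}$, and every element of $\mathcal{B}$ is supported on $B$, so $\underline{B}^2$ is supported on $B$. On the other hand $\underline{B}^2 = \sum_{g,h\in B}gh$ has a strictly positive coefficient on each element of the product set $BB$. Hence $BB\subseteq B$; being a finite subset of $R$ that contains $e$ and is closed under multiplication, $B$ is a subgroup of $\langle r\rangle$, as claimed.

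The only genuinely load-bearing idea is that intersecting $\mathrm{scf}_S(D_{2n})$ with $\mathbb{C}\langle r\rangle$ produces a subalgebra supported precisely on $B$; after that the conclusion is forced by nonnegativity of structure constants. The two bookkeeping points ($\underline{K}=\underline{R^-}+\underline{K\cap R}$, and the positivity of the coefficients of $\underline{B}^2$) are routine. The place that needs a word of care is the claim that $K$ is the unique superclass meeting $R^-$ — but this is exactly what ``glues reflections'' provides, since $s\langle r\rangle\subseteq K$ absorbs every reflection.
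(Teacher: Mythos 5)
Your proof is correct and is essentially the paper's argument in different packaging: both rest on Lemma \ref{lem:Hen_sums_span} together with the nonnegativity of the structure constants and the observation that a product of rotation-supported superclass sums has zero coefficient on $s$, hence no $\underline{K}$-component, hence is supported on $D_{2n}\setminus K$. The paper runs this superclass-by-superclass (taking $g_1,g_2\in D_{2n}\setminus K$ and examining $\underline{L_1}\cdot\underline{L_2}$), whereas you aggregate it into a single computation of $\underline{B}^2$ inside the intersection subalgebra $\mathrm{scf}_S(D_{2n})\cap\mathbb{C}\langle r\rangle$; the content is the same.
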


\begin{proof}
	Clearly, $D_{2n}\setminus K$ is a subset of $\langle r\rangle$ which contains the identity and is closed under inversion.  Let $g_1,g_2\in D_{2n}\setminus K$ and let $L_1$ and $L_2$ be the parts of $\mathcal{K}\setminus\{K\}$ containing $g_1$ and $g_2$, respectively.  Then the coefficient of $s$ in $\underline{L_1}\cdot\underline{L_2}$ is zero, whence $\underline{L_1}\cdot\underline{L_2}$ lies in the span of $\{\underline{L}:L\in\mathcal{K}\setminus\{K\}\}$.  This implies that $g_1g_2$ lies in a superclass other than $K$, so $D_{2n}\setminus K$ is closed under multiplication and is therefore a subgroup of $\langle r\rangle$.
\end{proof}

Before we complete the classification, we pause now to write down the superclass and supercharacter partitions of $F_d$ and $C_d$.  Recall their definitions in \eqref{eq:Fd_def} and \eqref{eq:Cd_def}.  A lengthy calculation using \eqref{eq:star_superclasses} and \eqref{eq:star_supercharacters} yields $F_d = (\mathcal{K}_d,\mathcal{X}_d)$ and $C_d = (\mathcal{L}_d,\mathcal{Y}_d)$, where
\begin{equation}\label{eq:Fd_calc}\begin{split}
	\mathcal{K}_d &= \big\{\{e\}, s\langle r\rangle\cup\{r^k\,:\, d \mathrel{\not\vert} k\}\big\} \cup \big\{\{r^k,r^{n-k}\}\,:\,d \mathrel{\vert} k\}\big\}, \\
	\mathcal{X}_d &= \Big\{\{\mathbf{1}_{D_{2n}}\}, \{\lambda\}\cup\Big\{\chi_k\,:\,\frac{n}{d} \mathrel{\vert} k\Big\}\Big\}\cup\bigcup_{\ell=1}^{\left\lfloor\frac{n}{2d}\right\rfloor}\Big\{\Big\{\chi_k\,:\,k\equiv\pm\ell\mathrel{\mathrm{mod}} \frac{n}{d}\Big\}\Big\}, \\
\end{split}\end{equation}
\begin{equation}\label{eq:Cd_calc}\begin{split}
	\mathcal{L}_d &= \big\{\{e\}, s\langle r\rangle\cup\{r^k\,:\, d \mathrel{\not\vert} k\}, \{r^k\,:\, d \mathrel{\vert} k\}\big\}, \\
	\mathcal{Y}_d &= \Big\{\{\mathbf{1}_{D_{2n}}\}, \{\lambda\}\cup\Big\{\chi_k\,:\,\frac{n}{d} \mathrel{\vert} k\Big\}, \Big\{\chi_k\,:\,\frac{n}{d} \mathrel{\not\vert} k\Big\}\Big\}.
\end{split}\end{equation}

\begin{lem}\label{lem:glue_reflect_in_Pd}
	Suppose $S=(\mathcal{K},\mathcal{X})$ is any supercharacter theory of $D_{2n}$ which glues reflections.  Then there exists a divisor $d$ of $n$ such that $F_d\leq S\leq C_d$.
\end{lem}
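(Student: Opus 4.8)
The plan is to extract the divisor $d$ from the structure of the superclass containing $s$ and then verify the two inequalities separately. By Lemma~\ref{lem:complement_forms_subgroup}, the part $K\in\mathcal{K}$ containing $s$ has complement $D_{2n}\setminus K$ equal to a subgroup of $\langle r\rangle$; since every subgroup of $\langle r\rangle$ has the form $\langle r^d\rangle$ for a unique divisor $d$ of $n$, this defines the candidate $d$. Explicitly, $K = s\langle r\rangle \cup \{r^k : d\nmid k\}$, which is exactly the distinguished superclass appearing in both $\mathcal{K}_d$ (see \eqref{eq:Fd_calc}) and $\mathcal{L}_d$ (see \eqref{eq:Cd_calc}). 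With $d$ fixed, I would prove $S\leq C_d$ first and $F_d\leq S$ second.

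For $S\leq C_d$: by Proposition~\ref{prop:Hen_order} it suffices to show $\mathcal{X}\leq\mathcal{Y}_d$, i.e.\ that each of the three parts of $\mathcal{Y}_d$ is a union of parts of $\mathcal{X}$. The part $\{\mathbf{1}_{D_{2n}}\}$ is automatic since $\{\mathbf 1_{D_{2n}}\}$ is always its own part. For the part $\{\lambda\}\cup\{\chi_k : \frac nd \mid k\}$, observe that $\langle r^d\rangle = D_{2n}\setminus K$ is $S$-normal (it is a union of $S$-superclasses), so $\mathrm{Irr}(D_{2n}/\langle r^d\rangle)$ — which consists precisely of $\mathbf{1}_{D_{2n}}$, $\lambda$, and the $\chi_k$ with $\frac nd\mid k$ — is a union of parts of $\mathcal{X}$; since $\{\mathbf 1_{D_{2n}}\}$ is a singleton part, the rest, $\{\lambda\}\cup\{\chi_k:\frac nd\mid k\}$, is also a union of parts of $\mathcal{X}$. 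The third part of $\mathcal{Y}_d$ is then the complement and hence also a union of parts of $\mathcal{X}$. Thus $S\leq C_d$.

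For $F_d\leq S$: here I would show $\mathcal{K}_d\leq\mathcal{K}$, i.e.\ every part of $\mathcal{K}$ is a union of parts of $\mathcal{K}_d$. The parts of $\mathcal{K}_d$ are $\{e\}$, the big reflection class $K=s\langle r\rangle\cup\{r^k:d\nmid k\}$, and the doubletons $\{r^k,r^{n-k}\}$ for $d\mid k$. Since $\{e\}\in\mathcal{K}$ always, and $K\in\mathcal{K}$ by construction of $d$, it remains only to check that every $S$-superclass contained in $\langle r^d\rangle\setminus\{e\}$ is a union of the pairs $\{r^k,r^{n-k}\}$ — equivalently, that $S$ is closed under $r^k\mapsto r^{-k}$ on $\langle r^d\rangle$. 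This follows because $S$ glues reflections: conjugation by $s$ inverts every rotation, and $s$ lies in the $S$-superclass $K$, so for any $r^k\in\langle r^d\rangle$ the element $r^{-k}=sr^ks^{-1}$ lies in the same $S$-superclass as $r^k$ (each $S$-superclass is a union of ordinary conjugacy classes, and $r^k,r^{-k}$ are ordinary-conjugate). Hence each $S$-superclass inside $\langle r^d\rangle$ is closed under inversion and is therefore a union of the parts $\{r^k,r^{n-k}\}$, giving $\mathcal{K}_d\leq\mathcal{K}$ and so $F_d\leq S$.

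The main obstacle is the bookkeeping in the second half: one must be careful that $d$ really is forced to be a divisor of $n$ (handled by Lemma~\ref{lem:complement_forms_subgroup}) and that the explicit forms of $\mathcal{K}_d$, $\mathcal{X}_d$, $\mathcal{L}_d$, $\mathcal{Y}_d$ in \eqref{eq:Fd_calc}--\eqref{eq:Cd_calc} are used consistently; the parity-of-$n$ subtleties (the $\chi_{n/2}$ convention, whether $r^{n/2}$ is a singleton class) should be absorbed by the remark identifying $\chi_{n/2}$ with $\mu_0+\mu_1$, so no case split on the parity of $n$ ought to be needed. Everything else reduces to the two refinement checks above via Proposition~\ref{prop:Hen_order}.
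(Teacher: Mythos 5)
Your proposal is correct and follows essentially the same route as the paper: Lemma \ref{lem:complement_forms_subgroup} produces $d$ and the superclass $K=s\langle r\rangle\cup\{r^k:d\nmid k\}$, the lower bound $F_d\leq S$ comes from the fact that every other part of $\mathcal{K}_d$ is a single conjugacy class, and the upper bound comes from $\langle r^d\rangle$ being $S$-normal. The paper phrases the last step via the deflated theory $S^{D_{2n}/\langle r^d\rangle}=M(D_{2n}/\langle r^d\rangle)$, but that is just a repackaging of your direct observation that $\mathrm{Irr}(D_{2n}/\langle r^d\rangle)$ is a union of parts of $\mathcal{X}$.
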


\begin{proof}
	First note that if $S$ factors over $\langle r\rangle$, then $S\in\mathcal{P}_1$ and we are done.  Assume $S$ does not factor over $\langle r\rangle$.  Let $K$ be the superclass of $S$ which contains $s\langle r\rangle$, and note that by Lemma \ref{lem:complement_forms_subgroup}, $D_{2n}\setminus\{K\}$ is a subgroup of $\langle r\rangle$, which is necessarily of the form $\langle r^d\rangle$ for some divisor $d$ of $n$.  Thus, $K = s\langle r\rangle\cup\{r^k:d\mathrel{\not\vert}k\}$, and so $\mathcal{K}$ shares this superclass with the superclass partition of $F_d$.  Since this is the only $F_d$-superclass which is not a conjugacy class, it easily follows that $F_d\leq S$.
	
	Since $\langle r^d\rangle$ is $S$-supernormal, we may consider the deflated supercharacter theory $S^{D_{2n}/\langle r^d\rangle}$, as in \eqref{eq:deflated_sct}.  By inspecting the superclass partition of $S^{D_{2n}/\langle r^d\rangle}$, we observe that $S^{D_{2n}/\langle r^d\rangle} = M(D_{2n}/\langle r^d\rangle)$, which implies that the number of blocks of $\mathcal{X}$ contained in $\mathrm{Irr}(D_{2n}/\langle r^d\rangle)$ is two.  Since one of these is the trivial character $\mathbf{1}_{D_{2n}}$, it follows that the other must be $\{\lambda\}\cup\{\chi_\ell\,:\,n/d\mathrel{\vert}\ell\}$.  Thus, $\mathcal{X}$ contains two of the three blocks of $\mathcal{X}_d$.  This easily implies that $\mathcal{X}\leq\mathcal{X}_d$, and therefore $S\leq C_d$.
\end{proof}

\begin{proof}[Proof of Theorem \ref{thm:D2n_char_classification}]
	By Lemma \ref{lem:form_of_char}, a supercharacter theory $S=(\mathcal{K},\mathcal{X})$ of $D_{2n}$ is characteristic if and only if either $S$ glues reflections, or $s\langle r\rangle$ is a union of superclasses.  If $S$ glues reflections, then Lemma \ref{lem:glue_reflect_in_Pd} implies that $S\in\mathcal{P}_d$ for some divisor $d$ of $n$.  If $S$ does not glue reflections, then Lemma \ref{lem:glues_reflections} and the following remark imply that $S = \psi(T)$ for some $T\in\mathcal{Q}$.

	Finally, we claim the subposets $\mathcal{P}_d$ ($d\mathrel{\vert}n$) and $\psi(\mathcal{Q})$ are pairwise disjoint.  Since every member of $\mathcal{P}_d$ glues reflections and no member of $\psi(\mathcal{Q})$ glues reflections, it follows that $\mathcal{P}_d\cap\psi(\mathcal{Q})=\emptyset$ for all $d\mathrel{\vert}n$.  By the proof of Lemma \ref{lem:glue_reflect_in_Pd}, every element of $\mathcal{P}_d$ has $K_d = s\langle r\rangle\cup\{r^k:d\mathrel{\not\vert}k\}$ as a part of its superclass partition, which implies that $\mathcal{P}_d\cap\mathcal{P}_e=\emptyset$ for distinct divisors $d,e\mathrel{\vert}n$.  Therefore, these subposets are pairwise disjoint.
\end{proof}


\section{The non-characteristic case}

Let $\mathcal{R}$ be the subposet of supercharacter theories of $\mathrm{CharSCT}(D_{2n})$ which glue reflections and whose superclass partitions satisfy the condition that if $K$ is a superclass containing only powers of $r$, then these powers are all of the same parity (we will say these supercharacter theories \emph{respect parity}).  Equivalently, by Lemma \ref{lem:complement_forms_subgroup}, $\mathcal{R}$ is the set of supercharacter theories of the form $S=T\ast_{\langle r^d\rangle}M(D_{2n}/\langle r^d\rangle)$, where $d$ divides $n$, $T\in D_{2n}\mathrm{InvSCT}(\langle r^d\rangle)$, and $\mathrm{Res}^{D_{2n}}_{\langle r^d\rangle}(\mu_0)$ is a $T$-superclass function.  For $i=0,1$, define functions $\psi_i:\mathrm{CharSCT}(D_{2n})\to\mathrm{SCT}(D_{2n})$ as follows.  For $S=(\mathcal{K},\mathcal{X})\in\mathcal{R}$, let $\psi_i(S) = (\mathcal{L}_i,\mathcal{Y}_i)$ be the following supercharacter theory.  If $s\langle r\rangle\cup A\cup B$ is the $S$-superclass containing the reflections, where $A$ contains only even powers of $r$ and $B$ only odd powers of $r$ (and one or both may be empty), then $\psi_i$ refines $\mathcal{K}$ by distinguishing parity, i.e.,
\[
	\mathcal{L}_0 = \big\{s\langle r^2\rangle\cup A, sr\langle r^2\rangle\cup B\big\}\cup\big(\mathcal{K}\setminus\{s\langle r\rangle\cup A\cup B\}\big)
\]
and
\[
	\mathcal{L}_1 = \big\{s\langle r^2\rangle\cup B, sr\langle r^2\rangle\cup A\big\}\cup\big(\mathcal{K}\setminus\{s\langle r\rangle\cup A\cup B\}\big).
\]
The supercharacter partition is defined by removing $\mu_i$ from the part $X$ of $\mathcal{X}$ which contains $\mu_0$ and $\mu_1$, so that
\[
	\mathcal{Y}_0 = \big\{\{\mu_0\}, X\setminus\{\mu_0\}\big\}\cup\big(\mathcal{X}\setminus\{X\}\big)
\]
and
\[
	\mathcal{Y}_1 = \big\{\{\mu_1\}, X\setminus\{\mu_1\}\big\}\cup\big(\mathcal{X}\setminus\{X\}\big).
\]

\begin{lem}
	If $S=(\mathcal{K},\mathcal{X})\in\mathcal{R}$, then $\psi_i(S) = (\mathcal{L}_i,\mathcal{Y}_i)$ forms a supercharacter theory for $i=0,1$.
\end{lem}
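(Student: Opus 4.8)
\emph{Proof proposal.} The plan is to verify directly the three axioms in the definition of a supercharacter theory for the pair $(\mathcal{L}_i,\mathcal{Y}_i)$, rather than going through Lemma~\ref{lem:Hen_sums_span}; since $\psi_i$ alters only one superclass of $\mathcal{K}$ (splitting it in two) and only one block of $\mathcal{X}$ (again splitting it in two), each axiom reduces to a statement about those two new parts. Throughout we may assume $n$ is even, since otherwise $\mu_0,\mu_1$ do not exist and the claim is vacuous. First I would record the structure of $S$: since $S\in\mathcal{R}$ glues reflections, Lemma~\ref{lem:complement_forms_subgroup} gives that the complement of the reflection superclass $K$ is a subgroup $\langle r^d\rangle\leq\langle r\rangle$, so $K=s\langle r\rangle\cup\{r^k:d\mathrel{\not\vert}k\}$, and hence $A=\{r^k\in K:k\text{ even}\}$, $B=\{r^k\in K:k\text{ odd}\}$; moreover, because $S$ glues reflections, $\mu_0$ and $\mu_1$ lie in a common block $X$ of $\mathcal{X}$ (as in Lemma~\ref{lem:glues_reflections}), so that removing $\mu_i$ from $X$ makes sense and leaves the nonempty block $X\setminus\{\mu_i\}\ni\mu_{1-i}$.

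Next I would check that $\mathcal{L}_i$ is a partition of $D_{2n}$ into unions of conjugacy classes and that $\mathcal{Y}_i$ is a partition of $\mathrm{Irr}(D_{2n})$. Both are visibly partitions. For $\mathcal{L}_i$, the sets $s\langle r^2\rangle$ and $sr\langle r^2\rangle$ are conjugacy classes of $D_{2n}$, and because $n$ is even the conjugacy classes of rotations have the form $\{r^k,r^{-k}\}$ with $k$ and $n-k$ of the same parity; hence $A$ and $B$ are each unions of rotation classes, and the two new parts of $\mathcal{L}_i$ are unions of conjugacy classes. Axiom~(1) then follows by counting: $\abs{\mathcal{L}_i}=\abs{\mathcal{K}}+1=\abs{\mathcal{X}}+1=\abs{\mathcal{Y}_i}$. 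Axiom~(2) holds because $e$ is not a reflection, so $\{e\}$ is untouched and remains a part of $\mathcal{L}_i$.

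The substance is axiom~(3). For a block $Y\in\mathcal{X}\setminus\{X\}$ (which is also a block of $\mathcal{Y}_i$), $\sigma_Y$ is constant on the parts of $\mathcal{K}$ and $\mathcal{L}_i$ refines $\mathcal{K}$, so $\sigma_Y$ is constant on the parts of $\mathcal{L}_i$. It remains to handle the two new blocks, namely $\sigma_{\{\mu_i\}}=\mu_i$ and $\sigma_{X\setminus\{\mu_i\}}=\sigma_X-\mu_i$ (using $\mu_i(e)=1$). Since $\sigma_X$ is constant on the parts of $\mathcal{K}$ and hence of $\mathcal{L}_i$, both of these follow once I show $\mu_i$ is constant on each part of $\mathcal{L}_i$. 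Any part of $\mathcal{L}_i$ other than the two reflection halves is a part of $\mathcal{K}$ consisting only of rotations, and these have uniform parity because $S$ respects parity, so $\mu_i(r^k)=(-1)^k$ is constant there. On the reflection halves one reads off the character table: $\mu_0$ takes the value $1$ on $s\langle r^2\rangle$ and on even powers of $r$, and $-1$ on $sr\langle r^2\rangle$ and on odd powers, so $\mu_0$ is constant on each of $s\langle r^2\rangle\cup A$ and $sr\langle r^2\rangle\cup B$; symmetrically $\mu_1$ is constant on each of $s\langle r^2\rangle\cup B$ and $sr\langle r^2\rangle\cup A$. Thus $\psi_i$ is engineered so that precisely $\mu_i$ is constant on its chosen split, which closes the argument.

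The only real obstacle is this last point---verifying that $\mu_i$ is constant on the parts of $\mathcal{L}_i$---and it is exactly where the hypothesis $S\in\mathcal{R}$ is used twice: ``glues reflections'' forces $\mu_0$ and $\mu_1$ into one block (so that $X\setminus\{\mu_i\}$ is meaningful), and ``respects parity'' is what makes $\mu_i$ constant on the rotation-only superclasses, with the even-$n$ character table supplying the values on the reflection classes. Everything else is bookkeeping.
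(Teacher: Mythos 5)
Your proposal is correct and follows essentially the same route as the paper: reduce everything to showing $\mu_i$ is constant on the parts of $\mathcal{L}_i$ (via the parity hypothesis on rotation-only superclasses and the character-table values on the two reflection classes), then obtain constancy of the remaining new supercharacter by subtraction. Your identity $\sigma_{X\setminus\{\mu_i\}}=\sigma_X-\mu_i$ is just a rearrangement of the paper's $\sigma_{X\setminus\{\mu_0\}}=\rho_{D_{2n}}-\mu_0-\sum_{Y\neq X}\sigma_Y$, and your explicit checks of axioms (1) and (2) are routine bookkeeping the paper leaves implicit.
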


\begin{proof}
	Assume $i=0$ (the proof for $i=1$ is identical) and let $(\mathcal{K},\mathcal{X})$ and $(\mathcal{L}_i,\mathcal{Y}_i)$ be labeled as above.  Immediately, we have that if $Y\in\mathcal{X}\setminus\{X\}$, Then $\sigma_Y$ is constant on the parts of $\mathcal{L}_i$.  By assumption, the parts of $\mathcal{K}\setminus\{s\langle r\rangle\cup A\cup B\}$ respect parity.  Thus because $\mu_0(r^k)=\mu_0(r^ks) = (-1)^k$, it follows that $\mu_0$ is constant on the parts of $\mathcal{L}_i$.  Finally, because the regular character $\rho_{D_{2n}}$ is constant on the parts of $\mathcal{L}_i$ and
	\[
		\sigma_{X\setminus\{\mu_0\}} = \rho_{D_{2n}} - \mu_0 - \sum_{Y\in\mathcal{X}\setminus\{X\}}\sigma_Y,
	\]
	it follows that $\sigma_{X\setminus\{\mu_0\}}$ is constant on the parts of $\mathcal{L}_i$.  Therefore, $(\mathcal{L}_i,\mathcal{Y}_i)$ is a supercharacter theory.
\end{proof}

Note that if $A$ and $B$ are both empty, then $\{\mu_0,\mu_1\}\in\mathcal{X}$ and $\psi_0=\psi_1=\psi$, where $\psi$ is the map defined in Theorem \ref{thm:D2n_char_classification}.  We claim that the every noncharacteristic supercharacter theory is of the form $\psi_i(S)$ for some $i=0,1$ and some $S\in\mathcal{R}$ for which $s\langle r\rangle$ is not a superclass.

\begin{thm}\label{thm:D2n_nonchar_classification}
	Let $S = (\mathcal{K},\mathcal{X})$ be a supercharacter theory which is not characteristic, and let $\tau\in\mathrm{Aut}(D_{2n})$ be the automorphism which sends $s$ to $sr$ and fixes all rotations.  Then $S\vee S^\tau\in\mathcal{R}$ and $S = \psi_i(S\vee S^\tau)$.
\end{thm}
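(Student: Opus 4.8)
\emph{Step 1 (shape of $S$).} Since $S$ is not characteristic, Lemma~\ref{lem:form_of_char} gives that $s\langle r\rangle$ is not a union of $S$-superclasses and that $S$ does not glue reflections; in particular $s\langle r^2\rangle$ and $sr\langle r^2\rangle$ are distinct conjugacy classes, so $n$ is even, and they lie in distinct $S$-superclasses $K_0\ni s$ and $K_1\ni sr$. Write $K_0 = s\langle r^2\rangle\cup A$ and $K_1 = sr\langle r^2\rangle\cup B$, where $A,B$ are the (disjoint) sets of rotations contained in $K_0,K_1$; because $s\langle r\rangle$ is not a union of superclasses, $A\cup B\neq\varnothing$. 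The automorphism $\tau$ fixes every rotation and interchanges the classes $s\langle r^2\rangle$ and $sr\langle r^2\rangle$, hence it interchanges $\mu_0,\mu_1$ and fixes all other irreducible characters, $\tau^2$ fixes $S$, and $S^\tau$ agrees with $S$ on every superclass consisting solely of rotations (so $\{S,S^\tau\}$ is the whole $\langle\tau\rangle$-orbit of $S$).

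\emph{Step 2 ($S\vee S^\tau$ glues reflections).} By Proposition~\ref{prop:Hen_join}, $S\vee S^\tau = (\mathcal{K}\vee\tau\mathcal{K},\,\mathcal{X}\vee\tau\mathcal{X})$. Choosing $r^a\in A\cup B$, say $r^a\in K_i$, the identities $\tau(r^a)=r^a$, $\tau(K_0)\ni sr$, $\tau(K_1)\ni s$ show that $r^a$ links $s$ to $sr$ in $\mathcal{K}\vee\tau\mathcal{K}$, so $S\vee S^\tau$ glues reflections. A short check (propagating the chains) shows the part of $\mathcal{K}\vee\tau\mathcal{K}$ containing $s$ is exactly $K_0\cup K_1 = s\langle r\rangle\cup A\cup B$, that every other part is $\{e\}$ or a rotation-only superclass of $S$ (the latter being $\tau$-fixed as sets), and dually that $\mathcal{X}\vee\tau\mathcal{X}$ merges exactly the $S$-blocks $X_0\ni\mu_0$ and $X_1\ni\mu_1$ into $X^{\vee}:=X_0\cup X_1$ and leaves all other $S$-blocks alone. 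By Lemma~\ref{lem:form_of_char}, $S\vee S^\tau$ is characteristic, and by Lemma~\ref{lem:glue_reflect_in_Pd} it lies in $\mathcal{P}_d=[F_d,C_d]$, where $\langle r^d\rangle = D_{2n}\setminus(K_0\cup K_1)$ (and $d\geq 2$, else $s\langle r\rangle=K_0\cup K_1$ would be a union of $S$-superclasses); moreover $\langle r^d\rangle$ is $S$-normal, being $\{e\}$ together with the rotation-only superclasses of $S$.

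\emph{Step 3 (the key parity statement).} I claim exactly one of $\{\mu_0\},\{\mu_1\}$ is an $S$-supercharacter block. That \emph{not both} can be blocks is immediate: then $\tau\mathcal{X}=\mathcal{X}$ as partitions, so $\mathcal{X}\vee\tau\mathcal{X}=\mathcal{X}$ and $\mu_0,\mu_1$ would lie in distinct blocks of $S\vee S^\tau$, contradicting Step~2. Showing that \emph{at least one} is a block is the main obstacle. I would argue from Lemma~\ref{lem:Hen_sums_span}: expanding $\underline{K_0}\,\underline{K_1}$ and $\underline{C}\,\underline{K_i}$ (for a rotation-only superclass $C$) by $r^a s r^b = s r^{b-a}$ and $sr^a\cdot sr^b = r^{b-a}$, the coefficient of a reflection $sr^m$ in such a product depends only on how many elements of $A$, $B$, or $C$ have each parity; forcing these coefficient functions to be constant on $K_0$ and on $K_1$ (as they must be) pins down that $A$ is a set of rotations of a single parity, $B$ of the opposite parity, and each rotation-only superclass of $S$ a set of powers of a single parity. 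Granting this, $\mu_0$ is constant on $S$-superclasses, so — reading off $\mu_0(s)=1$, $\mu_0(sr)=-1$, $\mu_0(r^k)=(-1)^k$ — exactly one of $\{\mu_0\},\{\mu_1\}$ is a block, with $A\subseteq\langle r^2\rangle$ precisely when $\{\mu_0\}$ is. In either case $S$, hence $S\vee S^\tau$ (same rotation-only superclasses), respects parity, so $S\vee S^\tau\in\mathcal{R}$.

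\emph{Step 4 (inversion by $\psi_i$).} Put $i=0$ if $A\subseteq\langle r^2\rangle$ and $i=1$ otherwise, so that $X_i=\{\mu_i\}$ by Step~3; by the $\tau$-symmetry $\mu_0\leftrightarrow\mu_1$ it suffices to treat, say, $i=0$. The reflection superclass of $S\vee S^\tau$ is $s\langle r\rangle\cup A\cup B$ with even rotation part $A$ and odd rotation part $B$, so $\psi_0$ re-splits it into $s\langle r^2\rangle\cup A=K_0$ and $sr\langle r^2\rangle\cup B=K_1$ and fixes $\{e\}$ and the rotation-only superclasses of $S$, recovering $\mathcal{K}$; on the supercharacter side $\psi_0$ splits $X^{\vee}=X_0\cup X_1$ into $\{\mu_0\}$ and $X^{\vee}\setminus\{\mu_0\}$, which are $X_0$ and $X_1$ because $X_0=\{\mu_0\}$, recovering $\mathcal{X}$. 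Therefore $\psi_i(S\vee S^\tau)=(\mathcal{K},\mathcal{X})=S$.

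The only non-routine part is Step~3's assertion that at least one of $\{\mu_0\},\{\mu_1\}$ is a block (equivalently, that $S$ respects parity): everything else is bookkeeping with joins and with the explicit descriptions \eqref{eq:star_superclasses}, \eqref{eq:star_supercharacters}, \eqref{eq:Fd_calc}, \eqref{eq:Cd_calc}.
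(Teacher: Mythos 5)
Your Steps 1, 2 and 4 are correct and match the bookkeeping in the paper's proof (the shape of $\mathcal{K}$ and $\mathcal{X}$ forced by Lemma \ref{lem:form_of_char}, the computation of $\mathcal{K}\vee\tau\mathcal{K}$ and $\mathcal{X}\vee\tau\mathcal{X}$, and the inversion by $\psi_i$ once one knows that exactly one of $\{\mu_0\},\{\mu_1\}$ is a block). But Step 3 — which you yourself flag as the only non-routine part — is where all the mathematical content of the theorem lives, and you have not proved it; you have only sketched a plan, and the mechanism you name does not deliver the conclusion you need. Concretely: you propose to force parity by making the coefficients of reflections in $\underline{K_0}\,\underline{K_1}$ and $\underline{C}\,\underline{K_i}$ constant on $K_0$ and $K_1$. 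For a rotation-only superclass $C\subseteq\langle r^d\rangle$ with $d$ odd, those constancy conditions reduce to containments of the form $C'_{\mathrm{even}}+A'\subseteq A'$ and $C'_{\mathrm{odd}}+A'\subseteq B'$, which one can check are \emph{automatically} satisfied when $A'$ and $B'$ are the even and odd non-multiples of $d$; they therefore give no information and cannot show that $C$ consists of powers of a single parity. That this needs a real argument is clear from the fact that $C_d$ glues reflections but does \emph{not} respect parity: the parity constraint must be extracted from the reflections being split in $S$, and your stated conditions do not do so.

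The class-sum route can in fact be completed, but by a different observation: in $\underline{K_0}\,\underline{K_1}$ the products of the $n/2$ reflections in $s\langle r^2\rangle$ with the $n/2$ reflections in $sr\langle r^2\rangle$ contribute exactly $n/2$ to the coefficient of every \emph{odd} rotation and $0$ to every even one, while the products $r^a r^b$ with $r^a\in A$, $r^b\in B$ contribute at most $\min(\abs{A},\abs{B})\leq n/2-1$ to any rotation; hence the coefficient of $r^m$ is $\geq n/2$ for $m$ odd and $\leq n/2-1$ for $m$ even, so constancy on \emph{every} superclass (including the rotation-only ones, not just $K_0$ and $K_1$) forces all superclasses to respect parity. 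One then still has to rule out $A$ and $B$ having the \emph{same} parity (e.g.\ $d=2$ with $A,B$ both nonempty and odd), which a comparison of the reflection and rotation coefficients on $K_0$ handles. None of this is in your write-up, so as it stands there is a genuine gap. For comparison, the paper proves the key step on the character side instead: it identifies the block $\{\mu_0,\mu_1\}\cup Y\cup Z$ of $S\vee S^\tau$ explicitly in two cases according to the parity of $d$, expands $\sigma_{\{\mu_0\}\cup Y}\cdot\sigma_{\{\mu_1\}\cup Z}$ using the multiplication rules for the $\chi_m$, and derives a contradiction from the coefficient of $\lambda$ unless one of $Y,Z$ is empty. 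Your approach, once repaired, would be a legitimately more elementary alternative, but the repair is precisely the part you left unproved.
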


Before we prove Theorem \ref{thm:D2n_nonchar_classification}, we digress to formulate the rules for multiplying irreducible characters of $D_{2n}$, all of which are consequences of the character tables \eqref{eq:odd_n_CT} and \eqref{eq:even_n_CT}.  First, we have
\[
	\mu_i^2 = \mathbf{1}_{D_{2n}}
\]
for $i=0,1$,
\[
	\mu_0\cdot\mu_1 = \lambda,
\]
and
\[
	\mu_i\cdot\lambda = \mu_{1-i}
\]
for $i=0,1$.  Next, let $0\leq i\leq 1$, $1\leq m< \frac{n}{2}$, $0\leq k<n$, and consider
\begin{align*}
	(\mu_i\cdot\chi_m)(r^k) &= 2(-1)^k\cos\Big(\frac{2\pi km}{n}\Big) \\
	&= 2\cos\Big(\frac{2\pi k\frac{n}{2}}{n}\Big)\cos\Big(\frac{2\pi km}{n}\Big) + 2\sin\Big(\frac{2\pi\frac{n}{2}}{n}\Big)\sin\Big(\frac{2\pi km}{n}\Big) \\
	&= 2\cos\Big(\frac{2\pi k(\frac{n}{2}-m)}{n}\Big) \\
	&= \chi_{\frac{n}{2}-m}(r^k).
\end{align*}
Since $\mu_i\cdot\chi_m$ vanishes on $s\langle r\rangle$, it follows that
\[
	\mu_i\cdot\chi_m = \chi_{\frac{n}{2}-m}.
\]
Finally, let $1\leq\ell,m<\frac{n}{2}$, $0\leq k<n$, and consider
\begin{equation}\label{eq:chi_mult}\begin{split}
	(\chi_\ell\cdot\chi_m)(r^k) &= 4\cos\Big(\frac{2\pi k\ell}{n}\Big)\cos\Big(\frac{2\pi km}{n}\Big) \\
	&= 2\Big(\cos\Big(\frac{2\pi k(\ell+m)}{n}\Big) + \cos\Big(\frac{2\pi k\abs{\ell-m}}{n}\Big)\Big).
\end{split}\end{equation}
Now, we can generalize the definition of $\chi_a$ to all integers $a$ if we define $\chi_a(r^k)=2\cos(\frac{2\pi ka}{n})$ and $\chi_a(sr^k)=0$.  In this case, $\chi_a=\chi_{-a}$ if $a$ is negative, $\chi_0=\lambda+\mathbf{1}_{D_{2n}}$, $\chi_{\frac{n}{2}}=\mu_0+\mu_1$ as before, and $\chi_a = \chi_{\frac{n}{2}-a}$ for $a>\frac{n}{2}$.  Thus, \eqref{eq:chi_mult} becomes
\[
	\chi_{\ell+m}(r^k) + \chi_{\abs{\ell-m}}(r^k).
\]
Finally, since $\chi_\ell\cdot\chi_m$ vanishes on $s\langle r\rangle$, we have
\[
	\chi_\ell\cdot\chi_m = \chi_{\ell+m} + \chi_{\abs{\ell-m}}.
\]

\begin{proof}[Proof of Theorem \ref{thm:D2n_nonchar_classification}]
	By Lemma \ref{lem:form_of_char},
	\[
		\mathcal{K} = \big\{s\langle r^2\rangle\cup A, s\langle r^2\rangle\cup B\big\} \cup \big\{K_i\,:\,i=1,\ldots,\abs{S}-2\big\},
	\]
	where at least one of $A$ or $B$ is nonempty.  Because $S$ does not glue reflections,
	\[
		\mathcal{X} = \big\{\{\mu_0\}\cup Y, \{\mu_1\}\cup Z\big\} \cup \big\{X_i\,:\,i=1,\ldots,\abs{S}-2\big\},
	\]
	and because $S$ is not characteristic, at least one of $Y$ or $Z$ is nonempty.  We may write $S\vee S^\tau = (\mathcal{K}\vee\mathcal{K}^\tau,\mathcal{X}\vee\mathcal{X}^\tau)$, where
	\[
		\mathcal{K}\vee\mathcal{K}^\tau = \big\{s\langle r\rangle\cup A\cup B\big\} \cup \big\{K_i\,:\,i=1,\ldots,\abs{S}-2\big\}
	\]
	and
	\[
		\mathcal{X}\vee\mathcal{X}^\tau = \big\{\{\mu_0,\mu_1\}\cup Y\cup Z\big\} \cup \big\{X_i\,:\,i=1,\ldots,\abs{S}-2\big\}.
	\]
	Thus, $S\vee S^\tau$ glues reflections, so this supercharacter theory is characteristic.  If we show that one of $Y$ or $Z$ is empty, then either $\mu_0$ or $\mu_1$ will be an $S$-superclass function.  In either case, this will imply that $S\vee S^\tau$ respects parity and that $S=\psi_i(S\vee S^\tau)$.  By Lemmas \ref{lem:complement_forms_subgroup} and \ref{lem:glue_reflect_in_Pd}, we know that $\{K_i:i=1,\ldots\abs{S}-2\}$ forms the superclass partition for the restricted supercharacter theory $S_{\langle r^d\rangle}$, and one of the parts of $\mathcal{X}\vee\mathcal{X}^\tau$ is equal to $\mathrm{Irr}(D_{2n}/\langle r^d\rangle)\setminus\{\mathbf{1}_{D_{2n}}\}$.  We consider two cases.

	\textbf{Case 1:}
	Suppose this part is $\{\mu_0,\mu_1\}\cup Y\cup Z$.  Then $\langle r^d\rangle \subseteq \ker(\mu_0)$, so $d$ is even.  Also, $\lambda\in Y\cup Z$, so without loss of generality we may assume $\lambda\in Y$.  We claim that $Z$ is empty.  Assume not, write $Y = \{\lambda\}\cup Y'$, and consider
	\begin{align*}
		\sigma_{\{\mu_0,\lambda\}\cup Y'}\cdot\sigma_{\{\mu_1\}\cup Z} &= (\mu_0 + \lambda + \sigma_{Y'})\cdot(\mu_1 + \sigma_Z) \\
		&= \mu_0\cdot\mu_1 + \mu_0\cdot\sigma_Z + \lambda\cdot\mu_1 + \lambda\cdot\sigma_Z + \sigma_{Y'}\cdot\mu_1 + \sigma_{Y'}\cdot\sigma_Z,
	\end{align*}
	which after expanding becomes
	\begin{equation}\label{eq:char_prod_1}
		\lambda + \sum_{\chi_m\in Z}2\chi_{\frac{n}{2}-m} + \mu_0 + \sigma_Z + \sum_{\chi_\ell\in Y'}2\chi_{\frac{n}{2}-\ell} + \sum_{\substack{\chi_\ell\in Y' \\ \chi_m\in Z}}4(\chi_{\ell+m} + \chi_{\abs{\ell-m}}),
	\end{equation}
	Note that
	\[
		Y\cup Z = \{\lambda\} \cup \Big\{\chi_m\,:\,1\leq m<\frac{n}{2},\,\frac{n}{d}\mathrel{\vert}m\Big\}.
	\]
	Moreover, $Y$ and $Z$ are disjoint sets of irreducible characters, so there are disjoint subsets $I,J\subseteq\{\frac{n}{d},\ldots,\frac{(d-2)n}{2d}\}$ such that $Y'=\{\chi_\ell:\ell\in I\}$, $Z=\{\chi_m:m\in J\}$, and $I\cup J=\{\frac{n}{d},\ldots,\frac{(d-2)n}{2d}\}$.  So because $\frac{n}{d}\mathrel{\vert}\frac{n}{2}$, it follows that $\frac{n}{2}-k\in I\cup J$ for all $k\in I\cup J$.  Thus, \eqref{eq:char_prod_1} becomes
	\begin{equation}\label{eq:char_prod_1.1}
		\sigma_{\{\mu_0\}\cup Y} + 2\sigma_Z + \sum_{\substack{\ell\in I \\ m\in J}}4(\chi_{\ell+m}+\chi_{\abs{\ell-m}}).
	\end{equation}
	Now, \eqref{eq:char_prod_1.1} is a linear combination of $S$-supercharacters by \cite[Theorem 2.2]{Diaconis2008}.  Thus since we have assumed $Z\neq\emptyset$ and $\sigma_Z$ appears in this equation with nonzero coefficient, it follows that $\mu_1$ must also appear with nonzero coefficient.  This can only happen if the index of one of the summands of the rightmost sum in \eqref{eq:char_prod_1.1} is $n/2$.    Hence, there exist $\ell\in I$ and $m\in J$ with $\ell+m=n/2$.  Hence we have $4\chi_{\frac{n}{2}}$ appearing in the rightmost sum of \eqref{eq:char_prod_1.1}.  But $4\chi_{\frac{n}{2}}=4\mu_0+4\mu_1$, so this implies that the coefficient of $\mu_0$, and therefore $\sigma_{\{\mu_0\}\cup Y}$, in \eqref{eq:char_prod_1.1} is at least 5.  Thus, the coefficient of $\lambda$ in this equation is at least 5, and therefore there exist $\ell\in I$ and $m\in J$ with $\abs{\ell-m}=0$.  But this is impossible because $I$ and $J$ are disjoint, so we have derived a contradiction.  Therefore, $Z$ is empty.

	\textbf{Case 2:}
	Now assume that one of the $X_i$ is equal to $\mathrm{Irr}(D_{2n}/\langle r^d\rangle)\setminus\{\mathbf{1}_{D_{2n}}\}$; without loss, assume it is $X_1$, so that
	\[
		X_1 = \{\lambda\}\cup\Big\{\chi_m\,:\,\frac{n}{d}\mathrel{\vert}m\Big\}.
	\]
	Then $d$ is odd and, with notation as in \eqref{eq:star_supercharacters}, $\{\mu_0, \mu_1\}\cup Y\cup Z$ is of the form $W_0^{D_{2n}}$, where $W_0$ is some part of the supercharacter partition $\mathcal{W}$ of $S_{\langle r^d\rangle}$.  Thus, by Frobenius reciprocity, $\mathrm{Res}^{D_{2n}}_{\langle r^d\rangle}(\sigma_{\{\mu_0,\mu_1\}\cup Y\cup Z})$ is equal to $\sigma_{W_0}$ up to scalar multiplication by a positive integer $C$.  Because $\mathcal{K}$ contains the parts $\{K_i:i=1,\ldots,\abs{S}-2\}$, which are the superclasses of $S_{\langle r^d\rangle}$, it follows that $\sigma_{\{\mu_0\}\cup Y}$ and $\sigma_{\{\mu_1\}\cup Z}$ are constant on these parts.  Thus, we have nonnegative integers $c_W$, $d_W$ for $W\in\mathcal{W}$ such that
	\[
		\mathrm{Res}^{D_{2n}}_{\langle r^d\rangle}(\sigma_{\{\mu_0\}\cup Y}) = \sum_{W\in\mathcal{W}}c_W\sigma_W
	\]
	and
	\[
		\mathrm{Res}^{D_{2n}}_{\langle r^d\rangle}(\sigma_{\{\mu_1\}\cup Z}) = \sum_{W\in\mathcal{W}}d_W\sigma_W.
	\]
	But
	\begin{align*}
		\sum_{W\in\mathcal{W}}(c_W+d_W)\sigma_W &= \mathrm{Res}^{D_{2n}}_{\langle r^d\rangle}(\sigma_{\{\mu_0\}\cup Y}) + \mathrm{Res}^{D_{2n}}_{\langle r^d\rangle}(\sigma_{\{\mu_1\}\cup Z}) \\
		&= \mathrm{Res}^{D_{2n}}_{\langle r^d\rangle}(\sigma_{\{\mu_0,\mu_1\}\cup Y\cup Z}) \\
		&= C\cdot \sigma_{W_0}.
	\end{align*}
	Hence, it follows that $c_W=d_W=0$ unless $W=W_0$, so the sets of irreducible constituents of the restrictions of $\sigma_{\{\mu_0\}\cup Y}$ and $\sigma_{\{\mu_1\}\cup Z}$ are both equal to $W_0$.

	If $\xi_m$ denotes the character of $\langle r^d\rangle$ given by $r^d\mapsto \zeta_{\frac{n}{d}}^m$, then
	\begin{align*}
		\mathrm{Res}^{D_{2n}}_{\langle r^d\rangle}(\sigma_{\{\mu_0\}\cup Y}) &= \mathrm{Res}^{D_{2n}}_{\langle r^d\rangle}(\mu_0) + \sum_{\chi_\ell\in Y}2\mathrm{Res}^{D_{2n}}_{\langle r^d\rangle}(\chi_m) \\
		&= \xi_{\frac{n}{2d}} + \sum_{\chi_m\in Y}2(\xi_m+\overline{\xi_m}) \\
		&= c_{W_0}\sum_{\xi_m\in W_0}\xi_m.
	\end{align*}
	Thus, $Y$ can only contain $\chi_m$ if $m\equiv\frac{n}{2d}\mathrel{\mathrm{mod}}\frac{n}{d}$.  Similarly, $Z$ can only contain characters of this form, and any character of this form must lie in $Y\cup Z$, so that
	\[
		Y\cup Z = \Big\{\chi_m\,:\,m\equiv\frac{n}{2d}\mathrel{\mathrm{mod}}\frac{n}{d}\Big\}.
	\]

	As with the previous case, we claim that one of $Y$ or $Z$ is empty.  Suppose both $Y$ and $Z$ are nonempty, write $Y=\{\chi_\ell:\ell\in I\}$ and $Z=\{\chi_m:m\in J\}$, where $\{(2k-1)\cdot\frac{n}{2d}:1\leq k\leq\frac{d-1}{2}\}=I\sqcup J$, and consider
	\begin{align*}
		\sigma_{\{\mu_0\}\cup Y}\cdot\sigma_{\{\mu_1\}\cup Z} &= (\mu_0 + \sigma_Y)\cdot(\mu_1 + \sigma_Z) \\
		&= \mu_0\cdot\mu_1 + \mu_0\cdot\sigma_Z + \sigma_Y\cdot\mu_1 + \sigma_Y\cdot\sigma_Z,
	\end{align*}
	which, after expanding becomes
	\begin{equation}\label{eq:char_prod_2}
		\lambda + \sum_{m\equiv\frac{n}{2d}\mathrel{\mathrm{mod}}\frac{n}{d}}2\chi_{\frac{n}{2}-m} + \sum_{\substack{\ell\in I \\ m\in J}}4(\chi_{\ell+m} + \chi_{\abs{\ell-m}}).
	\end{equation}
	Consider the first sum in \eqref{eq:char_prod_2}.  If $m\equiv\frac{n}{2d}\mathrel{\mathrm{mod}}\frac{n}{d}$, then $\frac{n}{2}-m\equiv0\mathrel{\mathrm{mod}}\frac{n}{d}$.  Thus, \eqref{eq:char_prod_2} becomes
	\begin{equation}\label{eq:char_prod_3}
		\sigma_{X_1} + \sum_{\substack{\ell\in I \\ m\in J}}4(\chi_{\ell+m}+\chi_{\abs{\ell-m}}).
	\end{equation}
	Let $\ell\in I$ and $m\in J$.  Then $\ell+m$ and $\abs{\ell-m}$ are both nonzero integers less than $n$ and $\ell+m,\abs{\ell-m}\equiv0\mathrel{\mathrm{mod}}\frac{n}{d}$.  Thus because $I$ and $J$ are nonempty, the rightmost sum in \eqref{eq:char_prod_3} is nonempty, and every summand belongs to $X_1$.  Hence, the coefficient of $\sigma_{X_1}$ in \eqref{eq:char_prod_3} is at least 3.  Thus, the coefficient of $\lambda$ in this equation is at least 3, and therefore there exist $\ell\in I$ and $m\in J$ with $\abs{\ell-m}=0$.  But because $I$ and $J$ are disjoint, $\ell$ and $m$ are always distinct.  Therefore we have derived a contradiction, so one of $Y$ or $Z$ -- without loss assume $Z$ -- is empty.

	In both cases, we have shown that $Z$ is empty, so we may write
	\[
		\mathcal{X} = \big\{\{\mu_0\}\cup Y, \{\mu_1\}\big\} \cup \big\{X_i\,:\,i=1,\ldots,\abs{S}-2\big\}.
	\]
	Thus because $\mu_1$ is an $S$-supercharacter, it follows that any two rotations $r^k$ and $r^\ell$ lie in the same superclass only if $k\equiv\ell\mathrel{\mathrm{mod}} 2$, and that
	\[
		A = \big\{r^k\,:\,d\mathrel{\not\vert}k,\,2\mathrel{\not\vert}k\big\}
	\]
	and
	\[
		B = \big\{r^k\,:\,d\mathrel{\not\vert}k,\,2\mathrel{\vert}k\big\}.
	\]
	Therefore $S = \psi_1(S\vee S^\tau)$.
\end{proof}


\bibliographystyle{alpha}
\bibliography{books,library}

\end{document}